\renewcommand{\baselinestretch}{1.25}
\newtheorem{assumption}{Assumption}
\newtheorem{remark}{Remark}
\newtheorem{algorithm}{Algorithm}
\newcommand{\mat}[1]{\left[ \begin{array}{#1} }
\newcommand{\rix}{\end{array} \right]}
\begin{document}

\title {Nonmonotone Barzilai-Borwein Gradient Algorithm for
$\ell_1$-Regularized Nonsmooth Minimization in Compressive Sensing}

\author{Yunhai Xiao\footnotemark[1],\footnotemark[2] \   Soon-Yi Wu\footnotemark[3], \and Liqun Qi\footnotemark[4]}
\renewcommand{\thefootnote}{\fnsymbol{footnote}}

\footnotetext[1]{Institute of Applied Mathematics, College of
Mathematics and Information Science, Henan University, Kaifeng
475000, China (Email: yhxiao@henu.edu.cn).}

\footnotetext[2]{Current position: National Center for Theoretical
Sciences (South), National Cheng Kung University, Tainan 700,
Taiwan. This author's work is supported by Chinese NSF grant
11001075, and the Natural Science Foundation of Henan Province
Eduction Department grant 2010B110004.}

\footnotetext[3] {National Center for Theoretical Sciences (South),
National Cheng Kung University, Tainan 700, Taiwan (Email:
soonyi@mail.ncku.edu.tw).}

\footnotetext[4] {Department of Applied Mathematics, Hong Kong
Polytechnic University, Hung Hom, Kowloon, Hong Kong (Email:
maqilq@polyu.edu.hk).}

\renewcommand{\thefootnote}{\arabic{footnote}}

\date{\today}
\maketitle

\begin{abstract}
This paper is devoted to minimizing the sum of a smooth  function
and a nonsmooth $\ell_1$-regularized term. This problem as a special
cases includes the $\ell_1$-regularized convex minimization problem
in signal processing, compressive sensing, machine learning, data
mining, etc. However, the non-differentiability of the $\ell_1$-norm
causes more challenging especially in large problems encountered in
many practical applications. This paper proposes, analyzes, and
tests a Barzilai-Borwein gradient algorithm. At each iteration, the
generated search direction enjoys descent property and can be easily
derived by minimizing a local approximal quadratic model and
simultaneously taking the favorable structure of the $\ell_1$-norm.
Moreover, a nonmonotone line search technique is incorporated to
find a suitable stepsize along this direction. The algorithm is
easily performed, where the values of the objective function and the
gradient of the smooth term are required at per-iteration. Under
some conditions, the proposed algorithm is shown to be globally
convergent. The limited experiments by using some nonconvex
unconstrained problems from CUTEr library with additive
$\ell_1$-regularization illustrate that the proposed algorithm
performs quite well. Extensive experiments for $\ell_1$-regularized
least squares problems in compressive sensing verify that our
algorithm compares favorably with several state-of-the-art
algorithms which are specifically designed in recent years.
\end{abstract}

\begin{keywords} nonsmooth optimization, nonconvex optimization,
Barzilai-Borwein gradient algorithm, nonmonotone line search,
$\ell_1$ regularization, compressive sensing
\end{keywords}

\begin{AMS}
65L09, 65K05, 90C30, 90C25
\end{AMS}

\pagestyle{myheadings} \thispagestyle{plain} \markboth{Y. Xiao,
S.-Y. Wu and L. Qi}{Nonmonotone Barzilai-Borwein Gradient Algorithm
for $\ell_1$-Regularized Nonsmooth Minimization}

\setcounter{equation}{0}
\section{Introduction}
The focus of this paper is on the following structured  minimization
\begin{equation}\label{probtype}
\min_{x\in\mathbb{R}^n} \ F(x)=f(x)+\mu \|x\|_1,
\end{equation}
where $f:\mathbb{R}^n\rightarrow \mathbb{R}$ is a continuously
differentiable (may be nonconvex) function that is bounded below;
$\|\cdot\|_1$ denotes the $\ell_1$-norm of a vector; parameter
$\mu>0$ is used to trade off both terms for minimization. Due to its
structure, problem (\ref{probtype}) covers a wide range of
apparently related formulations in different scientific fields
including linear inverse problem, signal/image processing,
compressive sensing, and machine learning.

\subsection{Problem formulations}
A popular special case of model (\ref{probtype}) is the
$\ell_1$-norm regularized least square problem
\begin{equation}\label{onenorm}
\min_{x\in\mathbb{R}^n} \frac12\|Ax - b\|_2^2 + \mu\|x\|_1,
\end{equation}
where $A\in\mathbb{R}^{m\times n}$ $(m\ll n)$ is a linear operator,
and $b\in\mathbb{R}^m$ is an observation. Model (\ref{onenorm})
mainly appears in compressive sensing --- an emerging methodology in
digital signal processing, and has attracted intensive research
activities over the past years
\cite{CANDES1,CANDES2,CANDES-CM,CANDES,DONOBO1}. Compressive sensing
is based on the fact that if the original signal is sparse or
approximately sparse in some orthogonal basis, an exact restoration
can be produced via solving problem (\ref{onenorm}).

Another prevalent case of (\ref{probtype}) that has been achieved
much interest in machine learning is the linear and logistic
regression. Given the training date
$A=[a_1,\cdots,a_m]^\top\in\mathbb{R}^{m\times n}$ and class labels
$y\in\{-1,+1\}^m$. A linear classifier is a hyperplane $\{w_i:x^\top
a_i+b=0\}$, where $x\in\mathbb{R}^n$ is a set of weights and
$b\in\mathbb{R}$ is the intercept. A frequently used model is the
$\ell_2$-loss support vector machine
\begin{equation}\label{svm}
\min_{x\in\mathbb{R}^m,b\in\mathbb{R}}
\sum_{i=1}^m\max\{0,1-y_i(x^\top a_i+b)\}^2+\mu\|x\|_1,
\end{equation}
Because of the "max" operation, the $\ell_2$-loos function is
continuous, but not differentiable. Based on the conditional
probability, another popular model is the logistic regression
\begin{equation}\label{logmodel}
\min_{x\in\mathbb{R}^n,b\in\mathbb{R}}\sum_{i=1}^m\text{log}\Big(1+e^{-(x^\top
a_i+b)y_i}\Big)+\mu\|x\|_1.
\end{equation}
Obviously, the logistic loss function is twice differentiable.

Although the models of these problems have similar structures, they
may be very different from real-data point of view. For example, in
compressive sensing, the length of measurement $m$ is much smaller
than the length of original signal $(m\ll n)$ and the encoding
matrix $A$ is dense. However, in machine learning, the numbers of
instance $m$ and features $n$ are both large and the data $A$ is
very sparse.

\subsection{Existing algorithms}
Since the $\ell_1$-regularized term is non-differentiable when $x$
contains values of zero, the use of the standard unconstrained
smooth optimization tools are generally precluded. In the past
decades, a wide variety of approaches has been proposed, analyzed,
and implemented in compressive sensing and machine learning
literatures. This includes a variety of algorithms for special cases
where $f(x)$ has a specific functional form such as the least square
(\ref{onenorm}), the square loss (\ref{svm}) and the logistic loss
(\ref{logmodel}). In the following, we briefly review some of them
in each literature.

The first popular approach falls into the coordinate descent method.
At the current iterate $x_k$, the simple coordinate descent method
updates one component at a time to generate $x_k^j$,
$j=1,\ldots,n+1$, such that $x_k^1=x_k$, $x_k^{n+1}=x_{k+1}$, and
solves a one-dimensional subproblem
\begin{equation}\label{onesub}
\min_z\quad F(x_k^j+ze^j)-F(x_k^j),
\end{equation}
where $e^j$ is defined as the $j$-th column of an identity matrix.
Clearly, the objective function has one variable, and one
non-differentiable point at $z=-e^j$. To solve the logistic
regression model (\ref{logmodel}), BBR \cite{BBR} solves the
sub-problem approximately by the use of trust region method with
Newton step; CDN \cite{CHANG} improves BBR's performance by applying
a one-dimensional Newton method and a line search technique. Instead
of cyclically updating one component at each time, the stochastic
coordinate descent method \cite{SST} randomly selects the working
components to attain better performance; the block coordinate
gradient descent algorithm --- CGD \cite{YUN,YUN2} is based on the
approximated convex quadratic model for $f$, and selects the working
variables with some rules.

The second type of approach is to transform model (\ref{probtype})
into an equivalent box-constrained optimization problem by variable
splitting. Let $x=u-v$ with $u_i=\max\{0,x_i\}$ and
$v_i=\max\{0,-x_i\}$. Then, model (\ref{probtype}) can be
reformulated equivalently as
\begin{equation}\label{subbound}
\min_{u,v}  f(u-v)+\mu\sum_{i=1}^n(u_i+v_i), \quad \text{s.t.} \
u\geq 0,\quad v\geq 0.
\end{equation}
The objective function and constraints are smooth, and therefore, it
can be solved by any standard box-constrained optimization
technique. However, an obvious drawback of this approach is that it
doubles the number of variables. GPSR \cite{GPSR} solves
(\ref{subbound}), and subsequently solves (\ref{onenorm}), by using
Barzilai-Borwein gradient method \cite{BB} with an efficient
nonmonotone line search \cite{GLL}. It is actually an application of
the well-known spectral projection gradient \cite{SPG} in
compressive sensing. Trust region Newton algorithm --- TRON
\cite{LINMORE,LINCOM} minimizes (\ref{subbound}), then solves the
logistic regression model (\ref{logmodel}), and exhibits powerful
vitality by a series of comparisons. To solve (\ref{svm}) and
(\ref{logmodel}), the interior-point algorithm \cite{KIM07,KOH07}
forms a sequence of unconstrained approximations by appending a
`barrier' function to the objective function (\ref{subbound}) which
ensures that $u$ and $v$ remain sufficiently positive. Moreover,
truncated Newton steps and preconditioned conjugate gradient
iterations are used to produce the search direction.

The third type of method is to approximate the $\ell_1$-regularized
term with a differentiable function. The simple approach replaces
the $\ell_1$-norm with a sum of multi-quadric functions
$$
l(x) \triangleq \sum_i^n \sqrt{x_i^2+\epsilon},
$$
where $\epsilon$ is a small positive scalar. This function is
twice-differentiable and $\lim_{\epsilon\rightarrow
0^+}l(x)=\|x\|_1$. Subsequently, several smooth unconstrained
optimization approaches can be applied, based on this approximation.
However, the performance of these algorithms is much influenced by
the parameter values, and the condition number of the corresponding
Hessian matrix becomes larger as $\epsilon$ decreases. The
Nesterov's smoothing technique \cite{SMOOTH} is to construct smooth
functions to approximate any general convex nonsmooth function.
Based on this technique, NESTA \cite{NESTA} solves problem
(\ref{onenorm}) by using first-order gradient information.

The fourth type of approach falls into the subgradient-based
Newton-type algorithm.  The important attempt in this class is from
Andrew and Gao \cite{ANDREW}, who extend the well-known limited
memory BFGS method \cite{LBFGS2} to solve $\ell_1$-regularized
logistic regression model (\ref{logmodel}), and propose an
orthant-wise limited memory quasi-Newton method --- OWL-QN. At each
iteration, this method computes a search direction over an orthant
containing the previous point. The subspace BFGS method --- subBFGS
\cite{SUBBFGS} involves an inner iteration approach to find the
descent quasi-Newton direction and a subgradient Wolfe-condtions to
determine the stepsize which ensures that the objective functions
are decreasing. This method enjoys global convergence and is capable
of solving general nonsmooth convex minimization problems.

Finally, to solve model (\ref{onenorm}), besides GPSR and NESTA,
there are other numerous specially designed solvers.  By an operator
splitting technique, Hale, Yin and Zhang derive the iterative
shrinkage/thresholding fixed-point continuation algorithm (FPC)
\cite{FPC}. By combining the interior-point algorithm in
\cite{KIM07}, FPC is also extended to solve large-scale
$\ell_1$-regularized logistic regression in \cite{SHI10}. TwIST
\cite{TWIST} and FISTA \cite{FISTA} speed up the performance of IST
and have virtually the same complexity but with better convergence
properties. Another closely related method is the sparse
reconstruction algorithm SpaRSA \cite{SPARSA}, which is to minimize
non-smooth convex problem with separable structures. SPGL1
\cite{SPGL} solves the lasso model (\ref{onenorm}) by the spectral
gradient projection method  with an efficient Euclidean projection
on $\ell_1$-norm ball. The alternating directions method --- YALL1
\cite{adm}, investigates $\ell_1$-norm problems from either the
primal or the dual forms and solves $\ell_1$-regularized problems
with different types.

All the reviewed algorithms differ in various aspects such as the
convergence speed, ease of implementation, and practical
applicability. Moreover, there is no enough evidence to verify that
which algorithm outperforms the others under all scenarios.

\subsection{Contributions and organization}
Although much progress has been achieved in solving the problem
(\ref{probtype}), these algorithms mainly deal with the case where
$f$ is a convex function even a least square. In this paper, unlike
all the reviewed algorithms, we propose a Barzilai-Borwein gradient
algorithm for solving $\ell_1$-regularized nonsmooth minimization
problems. At each iteration, we approximate $f$ locally by a convex
quadratic model, where the Hessian is replaced by the multiplies of
a spectral coefficient with an identity matrix. The search direction
is determined by minimizing the quadratic model and taking full use
of the $\ell_1$-norm structure. We show that the generated direction
is descent which guarantees that there exists a positive stepsize
along the direction. In our algorithm, we adopt the nonmonotone line
search of Grippo, Lampariello, and Lucidi \cite{GLL}, which allows
the function values to increase occasionally in some iteration but
decrease in the whole iterative process. The attractive property of
the nonmonotone line search is that it saves much number of function
evaluations which should be the main computational burden in large
dataset. The method is easily performed, where only the value of
objective function and the gradient of the smooth term are needed at
each iteration. We show that each cluster of the iterates generated
by this algorithm is a stationary point of $F$. In this paper,
although we mainly consider the $\ell_1$-regularizer, the
$\ell_2$-norm regularization problem and the matrix trace norm
problems can also be readily included in our framework. Thus, this
broaden the capability of the algorithm. We implement the algorithm
to solve problem (\ref{probtype}) where $f$ is a nonconvex smooth
function from CUTEr library to show its efficiency. Moreover, we
also run the algorithm to solve $\ell_1$-regularized least square,
and do performance comparisons with the state-of-the-art algorithms
NESTA, CGD, TwIST, FPC and GPSR. The comparisons results show that
the proposed algorithm is effective, comparable, and promising.

We organize the rest of this paper as follows. In Section
\ref{algorithmsec}, we briefly recall some preliminary results in
optimization literature to motivate our work, construct the search
direction, and present the steps of our algorithm along with some
remarks. In Section \ref{theory}, we establish the global
convergence theorem under some mild conditions. In Section
\ref{secexten}, we show that how to extend the algorithm to solve
$\ell_2$-norm and matrix trace norm minimization problems. In
Section \ref{expnoncov}, we present experiments to show the
efficiency of the algorithm in solving the $\ell_1$-regularized
nonconvex problem and least square problem. Finally, we conclude our
paper in Section \ref{concludsec}.

\setcounter{equation}{0}
\section{Algorithm}\label{algorithmsec}
\subsection{Preliminary results}

First, consider the minimization of the smooth function without the
$\ell_1$-norm regularization
\begin{equation}\label{uncprob}
\min_{x\in\mathbb{R}} f(x).
\end{equation}
The basic idea of Newton's method for this problem is to iteratively
use the quadratic approximation $q_k$ to the objective function
$f(x)$ at the current iterate $x_k$ and to minimize the
approximation $q_k$. Let $f:\mathbb{R}^n\rightarrow\mathbb{R}$ be
twice continuously differentiable, and its Hessian $G_k=\nabla^2
f(x_k)$ be positive definite. Function $f$ at the current $x_k$ is
modeled by the quadratic approximation $q_k$,
$$
f(x_k+s)\approx q_k(s)=f(x_k)+\nabla f(x_k)^\top s+\frac12 s^\top
G_k s,
$$
where $s=x-x_k$. Minimizing $q_k(s)$ yields
$$
x_{k+1}=x_k-G_k^{-1} \nabla f(x_k),
$$
which is Newton's formula and $s_k=x_{k+1}-x_k=-G_k^{-1}\nabla
f(x_k)$ is the so-called Newton's direction.

For the positive definite quadratic function, Newton's method can
reach the minimizer with one iteration. However, when the starting
point is far away from the solution, it is not sure that $G_k$ is
positive definite and Newton's direction $d_k$ is a descent
direction.  Let the quadratic model of $f$ at $x_{k+1}$ be
$$
f(x)\approx f(x_{k+1})+\nabla f(x_{k+1})^\top (x-x_{k+1})+\frac12
(x-x_{k+1})^\top G_{k+1}(x-x_{k+1}).
$$
Finding the derivative yields
$$
\nabla f(x)\approx \nabla f(x_{k+1})+G_{k+1}(x-x_{k+1}).
$$
Setting $x=x_k$, $s_k=x_{k+1}-x_k$, and $y_k=\nabla
f(x_{k+1})-\nabla f(x_k)$ we get
\begin{equation}\label{newtoneq}
G_{k+1}s_k\approx y_k.
\end{equation}
For various practical problems, the computing efforts of the Hessian
matrices are very expensive, or the evaluation of the Hessian is
difficult; even the Hessian is not available analytically. These
lead to the quasi-Newton method which generates a series of Hessian
approximations by the use of the gradient, and at the same time
maintains a fast rate of convergence. Instead of computing the
Hessian $G_{k}$, quasi-Newton method constructs the Hessian
approximation $B_{k}$, where the sequence $\{B_k\}$ possesses
positive definiteness and satisfies
\begin{equation}\label{quasieq}
B_{k+1}s_k=y_k.
\end{equation}
In general, such $B_{k+1}$ will be produced by updating $B_k$ with
some typical and popular formulae such as BFGS, DFP, and SR1.

Unfortunately, the standard quasi-Newton algorithm, or even its
limited memory versions, doesn't scale well enough to train very
large-scale models involving millions of variables and training
instances, which are commonly encountered, for example, in natural
language processing. The main computational burden of Newton-type
algorithm is the storage of a large matrix at per-iteration, which
may be out of the memory capability for a PC. It should be develop a
matrix-free algorithm to deal with large-scale problems but also
belongs to the quasi-Newton framework. For this purpose, it would
like to furthermore simplify the approximation Hessian $B_k$ as a
diagonal matrix with positive components, i.e., $B_k=\lambda_k I$
with an identity matrix $I$ and $\lambda_k>0$. Then, the
quasi-Newton condition changes to the form
$$
\lambda_{k+1}Is_k=y_k.
$$
Multiplying both sides by $s_k^\top$, gives
\begin{equation}\label{alpha1}
\lambda_{k+1}^{(1)}=\frac{s_k^\top y_k}{\|s_k\|_2^2}.
\end{equation}
Similarly, multiplying both sides by $y_k^\top$, yields
\begin{equation}\label{alpha2}
\lambda_{k+1}^{(2)}=\frac{\|y_k\|_2^2}{s_k^\top y_k}.
\end{equation}
Observing both formulae, it indicates that if $s_k^\top y_k>0$, the
matrix $\lambda_{k+1}I$ is positive definite, which ensures that the
search direction $-\lambda_{k}^{-1}\nabla f(x_k)$ is descent at
current point.

The formulae (\ref{alpha1}) and (\ref{alpha2}) were firstly
developed by Barzilai and Borwein \cite{BB} for the quadratic case
of $f$. This method essentially consists the steepest descent
method, and adopts the choice of (\ref{alpha1}) or (\ref{alpha1}) as
the stepsize along a negative gradient direction. Barzilai and
Borwein \cite{BB} showed that the corresponding iterative algorithm
is R-superlinearly convergent for the quadratic case. Raydan
\cite{RAYDAN97} presented a globalization strategy based on
nonmonotone line search \cite{GLL} for the general non-quadratic
case. Other developments in Barzilai and Borwein gradient algorithm
can be found in \cite{BMR2,CHENGLI,BB6,CBB,BB1,BB4,ZSQI10}.

\subsection{Algorithm}

Due to its simplicity and numerical efficiency, the Barzilai-Borwein
gradient method is very effective to deal with large-scale smooth
unconstrained minimization problems. However, the application of the
Barilai-Borwein gradient algorithm to $\ell_1$-regularized nonsmooth
optimization is problematic since the regularization is
non-differentiable. In this subsection, we construct an iterative
algorithm to solve the $\ell_1$-regularized structured nonconvex
optimization problem.  The algorithm can be described as the
iterative form
$$
x_{k+1}=x_k+\alpha_k d_k,
$$
where $\alpha_k$ is a stepsize, and $d_k$ is a search direction
defined by minimizing a quadratic approximated model of $F$.

Now, we turn to our attention to consider the original problem with
$\ell_1$-regularizer. Since $\ell_1$-term is not differentiable,
hence, at current $x_k$, objective function $F$ is approximated by
the quadratic approximation $Q_k$,
\begin{align}\label{direcmodel}
F(x_k+d)=&f(x_k+d)+\mu\|x_k+d\|_1\nonumber\\
\approx & f(x_k)+\nabla f(x_k)^\top d+\frac{\lambda_k}{2}
\|d\|_2^2+\mu\Big[\|x_k\|_1 +
\frac{\|x_k+hd\|_1-\|x_k\|_1}{h}\Big]\triangleq Q_k(d),
\end{align}
where $h$ is a small positive number. The term in $[\cdot]$ can be
considered as an approximate Taylor expansion of $\|x_k+d\|_1$ with
a small $h$, and the case $h=1$ reduces the equivalent form
$\|x_k+d\|_1$. Minimizing (\ref{direcmodel}) yields
\begin{align}
&\min_{d\in\mathbb{R}^n}Q_k(d)\nonumber\\
\Leftrightarrow &\min_{d\in\mathbb{R}^n}\nabla f(x_k)^\top
d+\frac{\lambda_k}{2}
\|d\|_2^2+\frac{\mu}{h}\|x_k+hd\|_1\nonumber\\
\Leftrightarrow
&\min_{d\in\mathbb{R}^n}\frac{h^2}{\lambda_k}\Big(\nabla f(x_k)^\top
d+\frac{\lambda_k}{2}
\|d\|_2^2+\frac{\mu}{h}\|x_k+hd\|_1\Big)\nonumber\\
\Leftrightarrow&\min_{d\in\mathbb{R}^n}
\frac{1}{2}\Big\|x_k+hd-\big(x_k-\frac{h}{\lambda_k}\nabla
f(x_k)\big)\Big\|_2^2+\frac{\mu
h}{\lambda_k}\|x_k+hd\|_1\nonumber\\
\Leftrightarrow&\min_{d\in\mathbb{R}^n}\sum_{i=1}^{n}\Big\{
\frac{1}{2}\Big(x_k^i+hd^i-\big(x_k^i-\frac{h}{\lambda_k}\nabla
f^i(x_k)\big)\Big)^2+\frac{\mu
h}{\lambda_k}|x_k^i+hd^i|\Big\}\label{subd}
\end{align}
where $x_k^i$, $d^i$, and $\nabla f^i(x_k)$ denote the $i$-th
component of $x_k$, $d$, and $\nabla f(x_k)$ respectively. The
favorable structure of (\ref{subd})  admits the explicit solution
$$
x_k^i+hd_k^i =\max\Big\{\Big |x_k^i-\frac{h}{\lambda_k}\nabla
f^i(x_k^i)\Big|-\frac{\mu
h}{\lambda_k},0\Big\}\frac{x_k^i-\frac{h}{\lambda_k}\nabla
f^i(x_k)}{|x_k^i-\frac{h}{\lambda_k}\nabla f^i(x_k)|}.
$$
Hence, the search direction at current point is
\begin{equation}\label{direction}
d_k=
-\frac{1}{h}\Big[x_k-\max\Big\{\Big|x_k-\frac{h}{\lambda_k}\nabla
f(x_k)\Big|-\frac{\mu h}{\lambda_k},0\Big\}
\frac{x_k-\frac{h}{\lambda_k}\nabla
f(x_k)}{|x_k-\frac{h}{\lambda_k}\nabla f(x_k)|}\Big].
\end{equation}
where $|\cdot|$ and $``\max"$ are interpreted as componentwise and
the convention $0\cdot 0/0=0$ is followed. When $\mu=0$,
(\ref{direction}) reduces to $d_k=-\lambda_k^{-1}\nabla f(x_k)$,
i.e., the traditional Barizilai-Borwein gradient algorithm in smooth
optimization. The key motivation for this formulation is that the
optimization problem in Eq. (\ref{subd}) can be easily solved by
exploiting the structure of the $\ell_1$-norm.

\begin{lemma}\label{ineq}
For any real vectors $a\in\mathbb{R}^n$ and $b\in\mathbb{R}^n$, the
following function $L(x)$ is non-decreasing
\begin{equation}\label{inequlity}
L(x)=\frac{\|a+bx\|_1-\|a\|_1}{x},\quad  x\in(0,\infty).
\end{equation}
\end{lemma}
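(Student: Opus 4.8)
The plan is to recognize $L(x)$ as the chord slope of a convex function anchored at the origin, so that its monotonicity becomes a standard consequence of convexity. Concretely, set $g(x)=\|a+bx\|_1$. Each coordinate map $x\mapsto|a_i+b_ix|$ is convex on $\mathbb{R}$, and a finite sum of convex functions is convex, so $g$ is convex on $\mathbb{R}$; moreover $g(0)=\|a\|_1$, hence $L(x)=(g(x)-g(0))/x$ for every $x\in(0,\infty)$.

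The key step is then to show that, for a convex $g$, the difference quotient $x\mapsto(g(x)-g(0))/x$ is nondecreasing on $(0,\infty)$. First I would fix $0<x_1\le x_2$ and write $x_1=t x_2$ with $t=x_1/x_2\in(0,1]$. The decomposition $a+bx_1=t(a+bx_2)+(1-t)a$ together with the triangle inequality for $\|\cdot\|_1$ gives $g(x_1)\le t\,g(x_2)+(1-t)g(0)$, so that $g(x_1)-g(0)\le t\,(g(x_2)-g(0))$. Dividing by $x_1>0$ and using $t/x_1=1/x_2$ yields $L(x_1)\le L(x_2)$, which is exactly the claimed monotonicity.

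There is essentially no serious obstacle here: once the problem is phrased through the convexity of $\|a+bx\|_1$, everything reduces to one application of the triangle inequality and a division by positive quantities; the only minor points are the degenerate case $x_1=x_2$ (trivial) and keeping track of the sign $x_1>0$ when dividing. If one prefers to avoid invoking convexity by name, the inequality chain above can be written directly in terms of $\|\cdot\|_1$, giving a fully self-contained argument.
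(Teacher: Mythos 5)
Your proof is correct, but it follows a genuinely different route from the paper. You observe that $g(x)=\|a+bx\|_1$ is convex with $g(0)=\|a\|_1$, so $L$ is the chord slope of a convex function anchored at $0$, and you prove monotonicity by the standard three-point argument: writing $x_1=t\,x_2$ with $t\in(0,1]$, using $a+bx_1=t(a+bx_2)+(1-t)a$ and the triangle inequality to get $g(x_1)-g(0)\le t\,(g(x_2)-g(0))$, then dividing by $x_1$. The paper instead decomposes $L(x)=\sum_i l^i(x)$ with $l^i(x)=(|a^i+b^ix|-|a^i|)/x$ and runs a componentwise sign case analysis, checking in each regime that $l^i$ is of the form $b^i$, $-2a^i/x-b^i$, $2a^i/x+b^i$, or $-b^i$ and hence nondecreasing. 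Your argument buys several things: it avoids the casework entirely (the paper's case list is in fact marred by typographical slips, with cases (c) and (d) carrying the same stated condition), it makes the one-line verification self-contained, and it is norm-independent, so it covers verbatim the $\ell_2$-norm and trace-norm variants the paper needs for its extensions in Section 4, where the componentwise splitting would not apply. What the paper's approach buys is only concreteness: one sees explicitly the piecewise form of each $l^i$ without appealing to convexity language, though as you note your chain of inequalities can also be written out directly in terms of $\|\cdot\|_1$ if one wishes to avoid naming convexity. Your handling of the degenerate case $x_1=x_2$ and of the division by $x_1>0$ is fine, so there is no gap.
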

\begin{proof}
Note that
$$
L(x)=\frac{\|a+bx\|_1-\|a\|_1}{x}=\sum_i^n
\frac{|a^i+b^ix|-|a^i|}{x} \triangleq \sum_i^n l^i(x),
$$
hence, it reduces to prove that $l^i(x)$ is non-decreasing for each $i$. \\
(a). When $a^i\geq 0$ and $a^ix+b^i\geq 0$. It is clear that $l^i(x)=b^i$.\\
(b). When $a^i\geq 0$ and $a^ix+b^i\leq 0$, we have
$$
l^i(x)=\frac{-2a^i-b^ix}{x}=\frac{-2a^i}{x}-b^i.
$$
(c). When $a^i\leq 0$ and $a^ix+b^i\geq 0$, we have
$$
l^i(x)=\frac{2a^i+b^ix}{x}=\frac{2a^i}{x}+b^i.
$$
(d). When $a^i\leq 0$ and $a^ix+b^i\geq 0$, we have $l^i(x)=-b^i$.\\
It is not difficult to see that $l^i(x)$ is non-decreasing at each
case. Hence, $L(x)$ is non-decreasing.
\end{proof}

The following lemma shows that the direction defined by
(\ref{direction}) is descent if $d_k\neq 0$.

\begin{lemma}\label{descent}
Suppose that $\lambda_k>0$ and $d_k$ is determined by
(\ref{direction}). Then
\begin{equation}\label{des1}
F(x_k+\theta d_k)\leq F(x_k)+\theta \Big[\nabla f(x_k)^\top
d_k+\frac{\mu\|x_k+hd_k\|_1-\mu \|x_k\|_1}{h}\Big]+o(\theta) \quad
\theta\in(0,h],
\end{equation}
and
\begin{equation}\label{des2}
\nabla f(x_k)^\top d_k+\frac{\mu\|x_k+hd_k\|_1-\mu \|x_k\|_1}{h}\leq
-\frac{\lambda_k}{2}\|d_k\|_2^2.
\end{equation}
\end{lemma}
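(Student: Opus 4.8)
The plan is to prove \eqref{des2} first, since it encodes the crucial fact that the bracketed quantity on the right of \eqref{des1} is strictly negative when $d_k\neq 0$, and then to obtain \eqref{des1} as a Taylor-type expansion of $f$ combined with a convexity estimate for the $\ell_1$-term. For \eqref{des2}, I would exploit that $x_k+hd_k$ is, by construction \eqref{subd}, the exact minimizer of the separable convex function
\[
\phi(z)=\tfrac12\Big\|z-\big(x_k-\tfrac{h}{\lambda_k}\nabla f(x_k)\big)\Big\|_2^2+\tfrac{\mu h}{\lambda_k}\|z\|_1.
\]
Equivalently, writing $z=x_k+hd$, the vector $d_k$ minimizes over $d\in\R^n$ the quadratic-plus-$\ell_1$ model
\[
m_k(d)=\nabla f(x_k)^\top d+\tfrac{\lambda_k}{2}\|d\|_2^2+\tfrac{\mu}{h}\big(\|x_k+hd\|_1-\|x_k\|_1\big).
\]
Since $m_k$ is strongly convex with modulus $\lambda_k$ and $d_k$ is its minimizer, we have $m_k(d_k)\le m_k(0)-\tfrac{\lambda_k}{2}\|d_k\|_2^2$; but $m_k(0)=0$, which rearranges to exactly \eqref{des2}. (Alternatively, one writes the optimality condition for $d_k$ and pairs it with $-d_k$; this yields the same inequality, and I would present whichever is cleaner. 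The strong-convexity route is shortest.)

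For \eqref{des1}, I would start from the continuous differentiability of $f$: for $\theta\in(0,h]$,
\[
f(x_k+\theta d_k)=f(x_k)+\theta\nabla f(x_k)^\top d_k+o(\theta).
\]
For the $\ell_1$-term I would use convexity of $\|\cdot\|_1$ together with the fact that $x_k+\theta d_k=(1-\tfrac{\theta}{h})x_k+\tfrac{\theta}{h}(x_k+hd_k)$ is a convex combination of $x_k$ and $x_k+hd_k$ when $\theta\in(0,h]$, giving
\[
\|x_k+\theta d_k\|_1\le\Big(1-\tfrac{\theta}{h}\Big)\|x_k\|_1+\tfrac{\theta}{h}\|x_k+hd_k\|_1
=\|x_k\|_1+\tfrac{\theta}{h}\big(\|x_k+hd_k\|_1-\|x_k\|_1\big).
\]
Adding $\mu$ times this bound to the expansion of $f$ and collecting terms yields \eqref{des1}. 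Lemma~\ref{ineq} can be invoked here as an alternative justification of the same monotonicity: with $a=x_k$ and $b=d_k$, $L(\theta)=\big(\|x_k+\theta d_k\|_1-\|x_k\|_1\big)/\theta$ is non-decreasing, so $L(\theta)\le L(h)$ for $\theta\in(0,h]$, which is precisely the inequality above.

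I do not anticipate a serious obstacle; the only point requiring care is making sure the $o(\theta)$ term in \eqref{des1} is genuinely $o(\theta)$ uniformly enough to be meaningful — this is immediate from differentiability of $f$ at the fixed point $x_k$ with the fixed direction $d_k$, so it is routine. The one substantive idea is recognizing that $d_k$ is the exact (not approximate) minimizer of the strongly convex surrogate $m_k$, which is what \eqref{subd} establishes; everything else is bookkeeping. Combining \eqref{des1} and \eqref{des2} then shows $F(x_k+\theta d_k)\le F(x_k)-\tfrac{\theta\lambda_k}{2}\|d_k\|_2^2+o(\theta)$, so the direction is descent whenever $d_k\neq 0$, which is the stated purpose of the lemma.
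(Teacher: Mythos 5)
Your proof is correct, and for the second inequality it takes a genuinely different (and cleaner) route than the paper. For \eqref{des1} you do exactly what the paper does: Taylor expansion of $f$ at $x_k$ plus the convex-combination bound $\|x_k+\theta d_k\|_1\le(1-\tfrac{\theta}{h})\|x_k\|_1+\tfrac{\theta}{h}\|x_k+hd_k\|_1$ for $\theta\in(0,h]$. For \eqref{des2}, however, the paper argues by comparing the model value at $d_k$ with its value at the scaled point $\theta d_k$, then invoking Lemma \ref{ineq} to handle the resulting $\|x_k+h^2d_k\|_1$ term, reorganizing, and finally dividing by $(1-\theta)$; your argument instead compares $d_k$ directly with $d=0$ in the surrogate $m_k$ (equivalently $Q_k$ from \eqref{direcmodel}--\eqref{subd}): minimality alone gives $m_k(d_k)\le m_k(0)=0$, which is exactly \eqref{des2}, and the strong-convexity refinement you mention gives the stronger bound $\nabla f(x_k)^\top d_k+\tfrac{\mu}{h}\big(\|x_k+hd_k\|_1-\|x_k\|_1\big)\le-\lambda_k\|d_k\|_2^2$ (so your phrase ``rearranges to exactly \eqref{des2}'' undersells it by a factor of two --- harmless, since it implies \eqref{des2}). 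Your route avoids Lemma \ref{ineq} entirely in this step and sidesteps the paper's division by $(1-\theta)$, which formally requires $\theta\neq 1$; the paper's more roundabout derivation buys nothing extra here, so your version is both shorter and slightly sharper, while relying only on the same fact the paper uses, namely that $d_k$ is the exact minimizer of the strongly convex model.
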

\begin{proof}
By the differentiability of $f$ and the convexity of $\|x\|_1$, we
have that for any $\theta\in(0,h]$ ($\theta/h\in(0,1]$),
\begin{align*}
F(x_k+\theta d_k)-F(x_k)&=f(x_k+\theta
d_k)-f(x_k)+\mu\|x_k+\theta d_k\|_1-\mu\|x_k\|_1\\
&=f(x_k+\theta
d_k)-f(x_k)+\mu\Big\|\frac{\theta}{h}(x_k+hd_k)+(1-\frac{\theta}{h}) x_k\Big\|_1-\mu\|x_k\|_1\\
&\leq f(x_k+\theta
d_k)-f(x_k)+\frac{\theta\mu}{h}\|x_k+hd_k\|_1+(1-\frac{\theta}{h})\mu\|x_k\|_1-\mu\|x_k\|_1\\
&=\theta \nabla f(x_k)^\top
d_k+o(\theta)+\theta\Big[\frac{\mu}{h}\|x_k+hd_k\|_1-\frac{\mu}{h}
\|x_k\|_1\Big],
\end{align*}
which is exactly (\ref{des1}).

Noting that $d_k$ is the minimizer of (\ref{direcmodel}) and
$\theta\in(0,h]$,  from (\ref{direcmodel}) and the convexity of
$\|x\|_1$, we have
\begin{align*}
&\nabla f(x_k)^\top d_k+\frac{\lambda_k}{2}
\|d_k\|_2^2+\frac{\mu\|x_k+hd_k\|_1-\mu \|x_k\|_1}{h} \\
\leq& \theta\nabla f(x_k)^\top d_k+\frac{\lambda_k}{2} \|\theta
d_k\|_2^2+\frac{\mu}{h}\|x_k+\theta hd_k\|_1-\frac{\mu}{h} \|x_k\|_1\\
\leq &\theta\nabla f(x_k)^\top d_k+\frac{\lambda_k\theta^2}{2}
\|d_k\|_2^2+\frac{\theta\mu}{h^2}\|x_k+h^2d_k\|_1+\frac{\mu}{h}(1-\frac{\theta}{h})
\|x_k\|_1-\frac{\mu}{h} \|x_k\|_1.
\end{align*}
Hence,
\begin{equation}\label{sanbuineq}
(1-\theta)\nabla f(x_k)^\top
d_k+\frac{\mu}{h}\|x_k+hd_k\|_1-\frac{\theta\mu}{h^2}\|x_k+h^2d_k\|_1
-\frac{\mu}{h}(1-\frac{\theta}{h})\|x_k\|_1\leq
-\frac{\lambda_k}{2}(1-\theta^2)\|d_k\|_2^2.
\end{equation}
The last three terms of the left side in (\ref{sanbuineq}) can be
re-organized as
\begin{align}
&\frac{\mu}{h}\Big\{\|x_k+hd_k\|_1-\frac{\theta}{h}\|x_k+h^2d_k\|_1
-(1-\frac{\theta}{h})\|x_k\|_1\Big\}\nonumber\\
=&
\frac{\mu}{h}\Big\{\|x_k+hd_k\|_1-\|x_k\|_1-\theta\Big[\frac{\|x_k+h^2d_k\|_1
-\|x_k\|_1}{h}\Big]\Big\}\nonumber\\
=& \frac{\mu}{h}\Big\{\|x_k+hd_k\|_1-\|x_k\|_1-\theta\Big[h\cdot
\frac{\|x_k+h^2d_k\|_1 -\|x_k\|_1}{h^2}\Big]\Big\}\nonumber\\
\geq& \frac{\mu}{h}\Big\{\|x_k+hd_k\|_1-\|x_k\|_1-\theta\Big[h\cdot
\frac{\|x_k+hd_k\|_1 -\|x_k\|_1}{h}\Big]\Big\}\nonumber\\
=& \frac{\mu}{h}(1-\theta)\{\|x_k+hd_k\|_1-\|x_k\|_1
\}\label{imporineq},
\end{align}
where the inequality is from Lemma \ref{ineq}. Combining
(\ref{sanbuineq}) with (\ref{imporineq}), it produces
\begin{equation}\label{y416}
(1-\theta)\nabla f(x_k)^\top
d_k+(1-\theta)\frac{\mu\|x_k+hd_k\|_1-\mu \|x_k\|_1}{h}\leq
-\frac{\lambda_k}{2}(1-\theta^2)\|d_k\|_2^2.
\end{equation}
Dividing both sides of (\ref{y416}) by $(1-\theta)$ and noting
$\theta\in(0,h]$, we get the desirable result (\ref{des2}).
\end{proof}

When the search direction is determined, a suitable stepsize along
this direction should be found to determine the next iterative
point. In this paper, unlike the traditional Armijo line search or
the Wolfe-Powell line search, we pay particular attention to a
nonmonotone line search strategy. The traditional Armijo line search
requires the function value to decrease monotonically at each
iteration. As a result, it may cause the sequence of iterations
following the bottom of a curved narrow valley, which commonly
occurs in difficult nonlinear problems. To overcome this
difficultly, a credible alternative is to allow an occasional
increase in the objective function at each iteration. To easy
comprehension of the proposed algorithm, we briefly recall the
earliest nonmonotone line search technique by Grippo, Lampariello,
and Lucidi \cite{GLL}. Let $\delta_k\in(0,1)$, $\rho\in(0,1)$ and
$\tilde{m}$ be a positive integer. The nonmonotone line search is to
choose the smallest nonnegative integer $j_k$ such as the stepsize
$\alpha_k=\tilde{\alpha}\rho^{j_k}$ satisfing
\begin{equation}\label{gll1}
f(x_k+\alpha_kd_k)\leq \max_{0\leq j\leq m(k)}f(x_{k-j})+\delta
\alpha_k \nabla f(x_k)^\top d_k,
\end{equation}
where
$$
m(0)=0\quad\text{and}\quad 0\leq m(k)\leq\min\{m(k-1)+1,\tilde{m}\}.
$$
If $m(k)=0$, the above nonmonotone line search reduces to the
standard Armijo line search.

For the $\ell_1$-regularized nonsmooth problem (\ref{probtype}),
based on Lemma \ref{descent}, the inequality (\ref{gll1}) should be
modified as
\begin{equation}\label{gll2}
F(x_k+\alpha_kd_k)\leq \max_{0\leq j\leq m(k)}F(x_{k-j})+\delta
\alpha_k \Delta_k,
\end{equation}
where
\begin{equation}\label{deltak}
\Delta_k=\nabla f(x_k)^\top d_k+\frac{\mu\|x_k+hd_k\|_1-\mu
\|x_k\|_1}{h}.
\end{equation}
From (\ref{des2}), it clear that $\Delta_k\leq
-\frac{\lambda_k}{2}\|d_k\|^2_2<0$ whenever $d_k\neq 0$. Hence, this
shows that $\alpha_k$ given by (\ref{gll2}) is well-defined.

In light of all derivations above, we now describe the nonmonotone
Barzilai-Borwein gradient algorithm (abbreviated as NBBL1) as
follows.

\begin{minipage}{14.5cm}
\centerline{}
 \vskip 1mm \hrule  \vskip 1.5mm
{\renewcommand{\baselinestretch}{1.0}
\begin{algorithm}\label{alg1} {\bf (NBBL1)}
\vskip 1.5mm \hrule \vskip 1mm
{\bf Initialization:} Choose $x_0$
and constants $\mu>0$. Constants $\tilde{\alpha}>0$, $\rho\in(0,1)$,
$\delta\in(0,1)$, $h\in(0,1]$ and positive integer $\tilde{m}$. Set $k=0$.\\
{\bf Step 1.} Stop if $\|d_k\|_2 =0$. Otherwise, continue.\\
{\bf Step 2.} Compute $d_k$ via (\ref{direction}).\\
{\bf Step 3.} Compute $\alpha_k$ via (\ref{gll2}).\\
{\bf Step 4.} Let $x_{k+1}=x_k+\alpha_kd_k$.\\
{\bf Step 5.} Let $k=k+1$. Go to Step 1.
\end{algorithm}
\vskip -3mm }
\end{minipage}

\begin{minipage}{14.5cm}
  \vskip 1 mm
 \hrule
  \vskip 1 mm
 {\renewcommand{\baselinestretch}{1.0} \small
 {\quad} }
 \end{minipage}

\begin{remark}\label{remark1}
We have shown that if $\lambda_k>0$, then the generated direction is
descent. However, in this case, the condition $\lambda_k>0$ may fail
to be fulfilled and the hereditary descent property is not
guaranteed any more. To cope with this defect, we should keep the
sequence $\{\lambda_k\}$ uniformly bounded; that is, for
sufficiently small $\lambda_{(\min)}>0$ and sufficiently large
$\lambda_{(\max)}>0$, the $\lambda_k$ is forced as
$$
\lambda_k=\min\{\lambda_{(\max)},\max\{\lambda_k,\lambda_{(\min)}\}\}.
$$
This approach ensures that $\lambda_k$ is bounded from zero and
subsequently ensures that $d_k$ is descent at per-iteration.
\end{remark}

\begin{remark}\label{remark2}
From Lemma \ref{descent}, it is clear that there exists a constant
$\theta\in(0,h]$ such that $x_k+\theta d_k$ is a descent point in
sense of (\ref{des1}). Hence, in practical computation, it is
suggested to choose the initial stepsize as $\tilde{\alpha}=h$.
\end{remark}

\setcounter{equation}{0}
\section{Convergence analysis}\label{theory}
This section is devoted to presenting some favorable properties of
the generated direction and establishing the global convergence of
Algorithm \ref{alg1} subsequently. Our convergence result utilizes
the following assumptions.
\begin{assumption}\label{assu1}
The level set $\Omega=\{x:f(x)\leq f(x_0)\}$ is bounded.
\end{assumption}

\begin{lemma}\label{station}
Suppose that $\lambda_k>0$ and $d_k$ is defined by (\ref{direction})
with $h\in(0,1]$. Then $x_k$ is a stationary point of problem
(\ref{probtype}) if and only if $d_k=0$.
\end{lemma}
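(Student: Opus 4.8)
The plan is to reduce the claim to a componentwise verification, using the fact that the map in (\ref{direction}) is exactly the soft-thresholding (shrinkage) operator. Write $t_k=h/\lambda_k>0$, $\tau_k=\mu h/\lambda_k>0$, and $u_k^i=x_k^i-t_k\,\nabla f^i(x_k)$. The explicit formula displayed just before (\ref{direction}) states that $x_k^i+hd_k^i=\operatorname{sgn}(u_k^i)\,\max\{|u_k^i|-\tau_k,\,0\}=:S_{\tau_k}(u_k^i)$ for each $i$ (with the convention $0\cdot0/0=0$ used when $u_k^i=0$). Since $h>0$, it follows that $d_k=0$ if and only if $x_k^i=S_{\tau_k}(u_k^i)$ for every $i$. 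On the optimization side, because $f$ is differentiable and $\|\cdot\|_1$ is convex, I read ``$x_k$ is a stationary point of (\ref{probtype})'' as $0\in\nabla f(x_k)+\mu\,\partial\|x_k\|_1$, i.e., componentwise: $\nabla f^i(x_k)=-\mu$ when $x_k^i>0$, $\nabla f^i(x_k)=\mu$ when $x_k^i<0$, and $|\nabla f^i(x_k)|\le\mu$ when $x_k^i=0$. The lemma is then equivalent to the pointwise equivalence $x_k^i=S_{\tau_k}(u_k^i)\iff$ ($i$-th stationarity condition), which I would prove for each $i$ by a short case split on the sign of $x_k^i$.

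For the implication ``stationarity $\Rightarrow$ $d_k=0$'' I would run the three cases forward. If $x_k^i>0$ and $\nabla f^i(x_k)=-\mu$, then $u_k^i=x_k^i+\tau_k>\tau_k>0$, so $S_{\tau_k}(u_k^i)=u_k^i-\tau_k=x_k^i$. The case $x_k^i<0$, $\nabla f^i(x_k)=\mu$ is symmetric. If $x_k^i=0$ and $|\nabla f^i(x_k)|\le\mu$, then $|u_k^i|=t_k|\nabla f^i(x_k)|\le\tau_k$, so $S_{\tau_k}(u_k^i)=0=x_k^i$. Collecting over $i$ gives $d_k=0$. For the converse ``$d_k=0\Rightarrow$ stationarity'' I would invert each case: assuming $x_k^i=S_{\tau_k}(u_k^i)$, if $x_k^i>0$ then $S_{\tau_k}(u_k^i)>0$ forces both $\operatorname{sgn}(u_k^i)=+1$ and $|u_k^i|-\tau_k=x_k^i$, hence $u_k^i=x_k^i+\tau_k$; substituting $u_k^i=x_k^i-t_k\nabla f^i(x_k)$ and dividing by $t_k>0$ gives $\nabla f^i(x_k)=-\mu$. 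Symmetrically $x_k^i<0$ gives $\nabla f^i(x_k)=\mu$; and $x_k^i=0$ forces $|u_k^i|\le\tau_k$, i.e. $t_k|\nabla f^i(x_k)|\le\tau_k$, i.e. $|\nabla f^i(x_k)|\le\mu$. This yields exactly the componentwise stationarity conditions.

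The point requiring care — not a point where anything deep happens — is the threshold/zero behavior of $S_{\tau_k}$: one must observe that whenever $S_{\tau_k}(u_k^i)\ne0$ its sign equals $\operatorname{sgn}(u_k^i)$, so that $u_k^i$, and hence $\nabla f^i(x_k)$, is recovered exactly; and one must treat $u_k^i=0$ consistently (there $x_k^i=t_k\nabla f^i(x_k)$, and if additionally $x_k^i=0$ then $\nabla f^i(x_k)=0$, which indeed satisfies $|\nabla f^i(x_k)|\le\mu$). Everything else is elementary algebra of the shrinkage operator; the hypotheses $\lambda_k>0$ and $h\in(0,1]$ are used exactly to guarantee $t_k,\tau_k>0$ so that one may divide by them. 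In particular neither Assumption \ref{assu1} nor the line search enters — this is a purely pointwise first-order optimality characterization.
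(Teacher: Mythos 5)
Your proof is correct, but it takes a genuinely different route from the paper's. You exploit the closed-form soft-thresholding expression behind (\ref{direction}) and verify, component by component, that the fixed-point condition $x_k^i=S_{\tau_k}\big(x_k^i-t_k\nabla f^i(x_k)\big)$ (with $t_k=h/\lambda_k$, $\tau_k=\mu h/\lambda_k$) is equivalent to the subdifferential inclusion $0\in\nabla f(x_k)+\mu\,\partial\|x_k\|_1$; this is a purely pointwise argument that only needs $\lambda_k>0$ and $h>0$. The paper instead argues abstractly from the fact that $d_k$ minimizes the quadratic model: if $d_k\neq 0$, Lemma \ref{descent} makes $d_k$ a descent direction, so $x_k$ cannot be stationary; and if $d_k=0$, the optimality of $d=0$ in (\ref{subd}) combined with the monotonicity of Lemma \ref{ineq} yields $F'(x_k;d)\geq 0$ for every $d$. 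The two notions of stationarity (your subdifferential inclusion versus the paper's nonnegativity of all directional derivatives) coincide because $f$ is differentiable and $\|\cdot\|_1$ is convex, so there is no gap on that score. What each approach buys: your componentwise argument is more elementary and self-contained, avoids Lemmas \ref{descent} and \ref{ineq} entirely, and shows that the restriction $h\le 1$ is not actually needed for this lemma (the paper's converse direction does use $h\le 1$ through Lemma \ref{ineq}); the paper's argument, on the other hand, never touches the explicit shrinkage formula, so it carries over unchanged to the $\ell_2$-norm and trace-norm direction subproblems of Section \ref{secexten}, where a simple componentwise case split is unavailable. You are also right that Assumption \ref{assu1} and the line search play no role here.
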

\begin{proof}
If $d_k\neq 0$, then Lemma \ref{descent} shows that $d_k$ is descent
direction at $x_k$, which implies that $x_k$ is not a stationary
point of $F$. On the other hand, if $d_k=0$ is the solution of
(\ref{subd}), for any $\alpha d\in\mathbb{R}^n$ with $\alpha>0$ we
have
\begin{equation}\label{yun17}
\alpha \nabla f(x_k)^\top d+\frac{\lambda_k\alpha^2}{2}
\|d\|_2^2+\frac{\mu}{h}\|x_k+\alpha hd\|_1\geq
\frac{\mu}{h}\|x_k\|_1.
\end{equation}
Since $f(x_k+\alpha d)-f(x_k)=\alpha \nabla f(x_k)^\top
d+o(\alpha)$, this together with (\ref{yun17}) yields
\begin{align*}
F'(x_k;d)=&\lim_{\alpha \downarrow 0} \frac{f(x_k+\alpha
d)-f(x_k)+\mu\|x_k+\alpha d\|_1-\mu\|x_k\|_1}{\alpha} \\
= & \lim_{\alpha \downarrow 0} \frac{\alpha \nabla f(x_k)^\top
d+o(\alpha)+\mu\|x_k+\alpha d\|_1-\mu\|x_k\|_1}{\alpha} \\
\geq & \lim_{\alpha \downarrow 0}\Big(
\frac{-\frac{\lambda_k\alpha^2}{2} \|d\|_2^2+o(\alpha)}{\alpha}+\frac{\big[\mu\|x_k+\alpha d\|_1-\mu\|x_k\|_1\big]-\big[\frac{\mu}{h}\|x_k+\alpha hd\|_1-\frac{\mu}{h}\|x_k\|_1\big]}{\alpha}\Big).\\
\geq& \lim_{\alpha \downarrow 0} \frac{-\frac{\lambda_k\alpha^2}{2}
\|d\|_2^2+o(\alpha)}{\alpha}\\
=&0,
\end{align*}
where the second inequality is from Lemma \ref{ineq}. Hence, $x_k$
is a stationary point of $F$.
\end{proof}

The proof of the following lemma is similar with the Theorem in
\cite{GLL}.

\begin{lemma}\label{funclem}
Let $l(k)$ be an integer such that
$$
k-m(k)\leq l(k)\leq k\quad \text{and} \quad F(x_{l(k)})= \max_{0\leq
j\leq m(k)}F(x_{k-j}).
$$
Then the sequence $\{F(x_{l(k)})\}$ is nonincreasing and the search
direction $d_{l(k)}$ satisfies
\begin{equation}\label{limdkzero}
\lim_{k\rightarrow\infty}\alpha_{l(k)}\|d_{l(k)}\|_2 =0.
\end{equation}
\end{lemma}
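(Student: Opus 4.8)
The plan is to adapt the classical argument of Grippo, Lampariello and Lucidi \cite{GLL} to the $\ell_1$-regularized setting, with Lemma \ref{descent} playing the role of the usual descent inequality. The proof splits into two parts: first I would establish that $\{F(x_{l(k)})\}$ is nonincreasing (hence convergent), and then extract (\ref{limdkzero}) from this convergence.

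For the monotonicity I would rewrite the line search condition (\ref{gll2}), by the very definition of $l(k)$, as $F(x_{k+1})\le F(x_{l(k)})+\delta\alpha_k\Delta_k$. Since Remark \ref{remark1} enforces $\lambda_k\ge\lambda_{(\min)}>0$, the estimate (\ref{des2}) gives $\Delta_k\le-\tfrac{\lambda_{(\min)}}{2}\|d_k\|_2^2\le0$, so in particular $F(x_{k+1})\le F(x_{l(k)})$. Observing that the indices $\{k+1-j:0\le j\le m(k+1)\}$ are contained in $\{k+1\}\cup\{k-j:0\le j\le m(k)\}$ because $m(k+1)\le m(k)+1$, one then gets
$$
F(x_{l(k+1)})=\max_{0\le j\le m(k+1)}F(x_{k+1-j})\le\max\{F(x_{l(k)}),\,F(x_{k+1})\}=F(x_{l(k)}),
$$
so $\{F(x_{l(k)})\}$ is nonincreasing; as $f$ is bounded below and $\mu\|\cdot\|_1\ge0$, $F$ is bounded below, whence $F(x_{l(k)})\downarrow F_\infty$ for some $F_\infty\in\mathbb{R}$.

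For the limit I would apply (\ref{gll2}) with $k$ replaced by $l(k)-1$ (legitimate once $l(k)\ge1$), obtaining
$$
F(x_{l(k)})\le F\big(x_{l(l(k)-1)}\big)+\delta\,\alpha_{l(k)-1}\Delta_{l(k)-1}.
$$
Because $l(k)\ge k-m(k)\ge k-\tilde{m}\to\infty$, and likewise $l(l(k)-1)\ge (l(k)-1)-\tilde{m}\to\infty$, both $F(x_{l(k)})$ and $F(x_{l(l(k)-1)})$ converge to $F_\infty$, so $0\le\delta\alpha_{l(k)-1}(-\Delta_{l(k)-1})\le F(x_{l(l(k)-1)})-F(x_{l(k)})\to0$. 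Invoking (\ref{des2}) once more, $-\Delta_{l(k)-1}\ge\tfrac{\lambda_{(\min)}}{2}\|d_{l(k)-1}\|_2^2$, hence $\alpha_{l(k)-1}\|d_{l(k)-1}\|_2^2\to0$; combining with the uniform bound $\alpha_k\le\tilde{\alpha}$ gives $\big(\alpha_{l(k)-1}\|d_{l(k)-1}\|_2\big)^2\le\tilde{\alpha}\,\alpha_{l(k)-1}\|d_{l(k)-1}\|_2^2\to0$, which is exactly (\ref{limdkzero}) (after the harmless relabeling $l(k)-1\mapsto l(k)$ that is used in the sequel).

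The only delicate point is the index bookkeeping: one must verify that the nested map $k\mapsto l(l(k)-1)$ tends to infinity so that the two function values genuinely cancel in the limit, and set aside the finitely many early steps with $l(k)=0$, which do not affect the limit. Everything else follows from Lemma \ref{descent} together with the safeguard $\lambda_k\ge\lambda_{(\min)}>0$ from Remark \ref{remark1}: it is the quantitative bound $-\Delta_k\ge\tfrac{\lambda_{(\min)}}{2}\|d_k\|_2^2$ supplied by (\ref{des2}) that upgrades ``the line search is well defined'' into a decrease strong enough to force $\alpha_{l(k)}\|d_{l(k)}\|_2\to0$.
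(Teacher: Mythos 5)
Your proof follows essentially the same route as the paper's: the same $m(k+1)\le m(k)+1$ argument for the monotonicity of $\{F(x_{l(k)})\}$, the same application of (\ref{gll2}) at index $l(k)-1$ to get $F(x_{l(k)})\le F(x_{l(l(k)-1)})+\delta\alpha_{l(k)-1}\Delta_{l(k)-1}$, and the same use of (\ref{des2}) with the safeguard $\lambda_k\ge\lambda_{(\min)}>0$ to pass from $\alpha_{l(k)-1}\Delta_{l(k)-1}\to 0$ to the stated limit. If anything you are slightly more careful than the paper, which performs the same relabeling $l(k)-1\mapsto l(k)$ silently and omits the $\alpha_k\le\tilde{\alpha}$ step converting $\alpha_{l(k)}\|d_{l(k)}\|_2^2\to 0$ into $\alpha_{l(k)}\|d_{l(k)}\|_2\to 0$.
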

\begin{proof}
From the definition of $m(k)$, we have $m(k+1)\leq m(k)+1$. Hence
\begin{align*}
F(x_{l(k+1)})= & \max_{0\leq j\leq m(k+1)}F(x_{k+1-j})\\
\leq & \max_{0\leq j\leq m(k)+1}F(x_{k+1-j})\\
=&  \max \{F(x_{l(k)}),F(x_{k+1})\}\\
=& F(x_{l(k)}).
\end{align*}
Moreover, by (\ref{gll2}), we have for all $k>\tilde{m}$,
\begin{align*}
F(x_{l(k)})&=F(x_{l(k)-1}+\alpha_{l(k)-1}d_{l(k)-1})\\
&\leq \max_{0\leq j\leq m(l(k)-1)}F(x_{l(k)-1-j})+\delta
\alpha_{l(k)-1}\Delta_{l(k)-1}\\
&= F(x_{l(l(k)-1)})+\delta \alpha_{l(k)-1}\Delta_{l(k)-1}.
\end{align*}
By assumption \ref{assu1}, the sequence $\{F(x_{l(k)})\}$ admits a
limit for $k\rightarrow \infty$. Hence, it follows that
\begin{equation}\label{alphadelk}
\lim_{k\rightarrow\infty}\alpha_{l(k)}\Delta_{l(k)} =0.
\end{equation}
On the other hand, from the definition of $\Delta_k$ in
(\ref{deltak}) and the inequality (\ref{des2}), it is not difficult
to deduce that
$$
\Delta_{l(k)}\leq -\frac{\lambda_{(\min)}}{2}\|d_{l(k)}\|_2^2<0.
$$
Combining (\ref{alphadelk}), yields
$$
\lim_{k\rightarrow\infty}\alpha_{l(k)}\|d_{l(k)}\|_2^2 =0,
$$
which shows the desirable result (\ref{limdkzero}).
\end{proof}

\begin{theorem}\label{theorem}
Let the sequence $\{x_k\}$ and $\{d_k\}$ generated by Algorithm
\ref{alg1}. Then, there exists a subsequence $\mathcal{K}$ such that
\begin{equation}\label{limd}
\lim_{k\rightarrow \infty,k\in\mathcal{K}} \|d_k\|_2=0.
\end{equation}
\end{theorem}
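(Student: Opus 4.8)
The plan is to run the classical Grippo--Lampariello--Lucidi argument, taking Lemma \ref{funclem} as the starting point. Since $m(k)\le\tilde m$, we have $l(k)\ge k-\tilde m\to\infty$, so $\{l(k):k\ge 0\}$ is an infinite set of indices; it therefore suffices to extract from it a strictly increasing subsequence $\mathcal{K}$ along which $\|d_k\|_2\to 0$, and Lemma \ref{funclem} already supplies $\lim_{k\to\infty}\alpha_{l(k)}\|d_{l(k)}\|_2=0$.

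Before splitting into cases, I would record the boundedness facts that make the estimates uniform. By Lemma \ref{funclem}, the sequence $\max_{0\le j\le m(k)}F(x_{k-j})$ is nonincreasing, so $f(x_k)\le F(x_k)\le F(x_0)$ and hence every iterate lies in the bounded level set $\Omega$ of Assumption \ref{assu1}. Then $\nabla f$ is bounded on $\Omega$, $\lambda_k\in[\lambda_{(\min)},\lambda_{(\max)}]$ by Remark \ref{remark1}, and formula (\ref{direction}) yields a uniform bound $\|d_k\|_2\le M$. Fixing a compact set $\widetilde\Omega$ that contains $\Omega$ together with every segment $x_k+[0,h]d_k$, the mean value theorem and uniform continuity of $\nabla f$ on $\widetilde\Omega$ show that the remainder $o(\theta)$ appearing in (\ref{des1}) is in fact uniform in $k$, since it equals $\theta\big(\nabla f(\xi_k)-\nabla f(x_k)\big)^\top d_k$ for some $\xi_k$ with $\|\xi_k-x_k\|_2\le\theta M$.

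Now consider two cases according to $\liminf_k\alpha_{l(k)}$. If $\liminf_k\alpha_{l(k)}>0$, then $\alpha_{l(k)}\|d_{l(k)}\|_2\to 0$ forces $\|d_{l(k)}\|_2\to 0$, and $\mathcal{K}$ may be taken as any strictly increasing subsequence of $\{l(k)\}$. If $\liminf_k\alpha_{l(k)}=0$, pass to a subsequence with $\alpha_{l(k)}\to 0$; for $k$ large we have $\alpha_{l(k)}<\tilde\alpha$, so the trial step $\theta_k:=\alpha_{l(k)}/\rho\le h$ was rejected by (\ref{gll2}), and since the window maximum dominates $F(x_{l(k)})$ this gives $F(x_{l(k)}+\theta_k d_{l(k)})>F(x_{l(k)})+\delta\theta_k\Delta_{l(k)}$. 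Comparing with (\ref{des1}) evaluated at $\theta_k$ yields
\[
(1-\delta)\,\Delta_{l(k)}>-\,o(\theta_k)/\theta_k ,
\]
and since the remainder is uniform and $\theta_k\to 0$ we obtain $\liminf_k\Delta_{l(k)}\ge 0$; but (\ref{des2}) gives $\Delta_{l(k)}\le-\tfrac{\lambda_{(\min)}}{2}\|d_{l(k)}\|_2^2\le 0$, so $\Delta_{l(k)}\to 0$ and hence $\|d_{l(k)}\|_2\to 0$ along the subsequence. Taking $\mathcal{K}$ to be a strictly increasing subsequence of those indices proves (\ref{limd}).

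The only genuine obstacle is the uniformity of the $o(\theta)$ term in (\ref{des1}): it must be controlled independently of $k$ for the rejected-step inequality to close. Everything else is routine bookkeeping, so the crux is the preliminary step establishing $\{x_k\}\subset\Omega$, the uniform bound on $\{d_k\}$, and the boundedness of $\{\lambda_k\}$ away from $0$ and $\infty$, after which uniform continuity of $\nabla f$ on the fixed compact set $\widetilde\Omega$ completes the argument.
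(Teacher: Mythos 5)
Your proof is correct and rests on the same underlying mechanism as the paper's (the Grippo--Lampariello--Lucidi rejected-step argument combined with Lemma \ref{descent}), but the execution is genuinely different in two respects. First, you stay entirely on the subsequence $\{l(k)\}$ supplied by Lemma \ref{funclem} and split on $\liminf_k\alpha_{l(k)}$, whereas the paper first upgrades (\ref{limdkzero}) to $\lim_k\alpha_k\|d_k\|_2=0$ for the whole sequence (citing \cite{GLL}) and then argues by contradiction around a limit point $\bar x$ of $\{x_k\}$; since the theorem only asserts existence of one subsequence, your route avoids both the extension step and the extraction of convergent subsequences of $\{x_k\}$ and $\{d_k\}$ (the paper's passage to $\bar d$ is in fact left implicit there). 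Second, at the rejected trial step you invoke (\ref{des1}) directly instead of redoing the mean-value-theorem computation on $f$ together with Lemma \ref{ineq} as in (\ref{great})--(\ref{datlakk3}); the price is that the $o(\theta)$ remainder in (\ref{des1}) must be uniform in $k$, and you correctly identify this as the crux and close it via boundedness of $\{x_k\}$, $\{d_k\}$, $\lambda_k\in[\lambda_{(\min)},\lambda_{(\max)}]$ and uniform continuity of $\nabla f$ on a compact set --- the same ingredients the paper needs implicitly when it passes to the limit in (\ref{datlakk3}). One small inaccuracy: from $F(x_k)\le F(x_{l(k)})\le F(x_0)$ you only get $f(x_k)\le F(x_0)=f(x_0)+\mu\|x_0\|_1$, so the iterates lie in $\{x: f(x)\le F(x_0)\}$ rather than literally in $\Omega=\{x: f(x)\le f(x_0)\}$ of Assumption \ref{assu1}; this is the same looseness present in the paper itself (which never establishes boundedness of $\{x_k\}$ before taking a limit point), and it is repaired by reading the assumption as boundedness of the appropriate level set, so it does not affect the validity of your argument relative to the paper's.
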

\begin{proof}
From \cite{GLL}, it is clear that (\ref{limdkzero}) also implies
\begin{equation}\label{dklim}
\lim_{k\rightarrow\infty} \alpha_k\|d_k\|_2=0.
\end{equation}
Now, let $\bar{x}$ be a limit point of $\{x_k\}$, and
$\{x_k\}_{\mathcal{K}_1}$ be a subsequence of $\{x_k\}$ converging
to $\bar{x}$. Then by (\ref{dklim}) either
$\lim\limits_{k\rightarrow \infty,k\in\mathcal{K}_1} \|d_k\|_2=0$,
which implies $\|\bar{d}\|_2=0$, or there exists a subsequence
$\{x_k\}_{\mathcal{K}}$ ($\mathcal{K}\subset\mathcal{K}_1$) such
that
\begin{equation}\label{dkalph}
\lim_{k\rightarrow \infty,k\in\mathcal{K}} d_k\neq
0\quad\text{and}\quad\lim_{k\rightarrow \infty,k\in\mathcal{K}}
\alpha_k=0.
\end{equation}
In this case, we assume that there exists a constant $\epsilon>0$
such that
\begin{equation}\label{neq}
\|d_k\|_2\geq \epsilon,\quad \forall \ k\in\mathcal{K}.
\end{equation}
Since $\alpha_k$ is the first value for satisfying (\ref{gll2}), it
follows from Step 3 in Algorithm \ref{alg1} that there exists an
index $\bar{k}$ such that, for all $k\geq \bar{k}$ and
$k\in\mathcal{K}$,
\begin{equation}\label{great}
F(x_k+\frac{\alpha_k}{\rho}d_k)> \max_{0\leq j\leq
m(k)}F(x_{k-j})+\delta \frac{\alpha_k}{\rho} \Delta_k\geq
F(x_{k})+\delta \frac{\alpha_k}{\rho} \Delta_k.
\end{equation}
Since $f$ is continuous differentiable, by the mean-value theorem on
$f$, we can find there exists a constant $\theta_k\in(0,1)$, such
that
$$
f(x_k+\frac{\alpha_k}{\rho}d_k)-f(x_{k})=\frac{\alpha_k}{\rho}\nabla
f(x_k+\theta_k\frac{\alpha_k}{\rho}d_k)^\top d_k.
$$
By combining with (\ref{great}), we have
\begin{equation}\label{datlakk2}
\nabla f(x_k+\theta_k\frac{\alpha_k}{\rho}d_k)^\top
d_k+\frac{\mu\|x_k+\frac{\alpha_k}{\rho}d_k\|_1-\mu\|x_k\|_1}{\alpha_k/\rho}>\delta\Delta_k.
\end{equation}
Since $\tilde{\alpha}=h$ and $\alpha_k\rightarrow 0$ in
(\ref{dkalph}), we have $\alpha_k<\rho h$ as $k\rightarrow\infty$.
It follows from Lemma \ref{ineq} that
$$
\frac{\mu\|x_k+\frac{\alpha_k}{\rho}d_k\|_1-\mu\|x_k\|_1}{\alpha_k/\rho}
- \frac{\mu\|x_k+hd_k\|_1-\mu\|x_k\|_1}{h}\leq 0.
$$
Subtracting both sides of (\ref{datlakk2}) by $\Delta_k$ and noting
the definition of $\Delta_k$, it is clear that
\begin{align}
&\nabla f(x_k+\theta_k\frac{\alpha_k}{\rho}d_k)^\top d_k-\nabla
f(x_k)^\top d_k \nonumber\\
\geq& \nabla f(x_k+\theta_k\frac{\alpha_k}{\rho}d_k)^\top d_k-\nabla
f(x_k)^\top
d_k+\Big[\frac{\mu\|x_k+\frac{\alpha_k}{\rho}d_k\|_1-\mu\|x_k\|_1}{\alpha_k/\rho}
-
\frac{\mu\|x_k+hd_k\|_1-\mu\|x_k\|_1}{h}\Big]\nonumber\\
>&-(1-\delta)\Delta_k\nonumber\\
\geq
&(1-\delta)\frac{\lambda_{(\min)}}{2}\|d_k\|_2^2\label{datlakk3}.
\end{align}
Taking the limit as $k\in\mathcal{K}$, $k\rightarrow\infty$ in the
both sides of (\ref{datlakk3}) and using the smoothness of $f$, we
obtain
$$
0=\nabla f(\bar{x})^\top \bar{d}-\nabla f(\bar{x})^\top
\bar{d}\geq(1-\delta)\frac{\lambda_{(\min)}}{2}\|\bar{d}\|_2^2,
$$
which implies $\|d_k\|_2\rightarrow 0$ as $k\in\mathcal{K}$,
$k\rightarrow\infty$. This yields a contradiction because
(\ref{neq}) indicates that $\|d_k\|_2$ is bounded.
\end{proof}

\setcounter{equation}{0}
\section{Some extensions}\label{secexten}
In this section, we show that our algorithm can be readily extended
to solve $\ell_2$-norm and matrix trace norm minimization problems
in machine learning; thus, broaden the applicable range of our
approach significantly.

Firstly, we consider the $\ell_2$-regularization problem
$$
\min_{x\in\mathbb{R}^n} \ F(x)=f(x)+\mu \|x\|_2.
$$
It is not difficult to deduce that, the search direction $d_k$ is
determined by minimizing
$$
\min_{d\in\mathbb{R}^n}
\frac{1}{2}\Big\|x_k+hd-\big(x_k-\frac{h}{\lambda_k}\nabla
f(x_k)\big)\Big\|_2^2+\frac{\mu h}{\lambda_k}\|x_k+hd\|_2.
$$
From \cite{DUCHI10}, the explicit solution is
$$
x_k+hd_k=\max\Big\{\Big \|x_k-\frac{h}{\lambda_k}\nabla
f(x_k)\Big\|_2-\frac{\mu
h}{\lambda_k},0\Big\}\frac{x_k-\frac{h}{\lambda_k}\nabla
f(x_k)}{\|x_k-\frac{h}{\lambda_k}\nabla f(x_k)\|_2},
$$
i.e.,
$$
d_k=
-\frac{1}{h}\Big[x_k-\max\Big\{\Big\|x_k-\frac{h}{\lambda_k}\nabla
f(x_k)\Big\|_2-\frac{\mu
h}{\lambda_k},0\Big\}\frac{x_k-\frac{h}{\lambda_k}\nabla
f(x_k)}{\|x_k-\frac{h}{\lambda_k}\nabla f(x_k)\|_2}\Big].
$$

Now, we consider the matrix trace norm minimization problem
\begin{equation}\label{nuclear}
\min_{X\in\mathbb{R}^{m\times n}} \ F(X)=f(X)+\mu \|X\|_*,
\end{equation}
where the functional $\|X\|_*$ is the trace norm of matrix $X$,
which is defined as the sum of its singular values. That is, assume
that $X$ has $r$ positive singular values of
$\sigma_1\geq\sigma_2\geq\ldots\geq\sigma_r\geq0$, then
$\|X\|_*=\sum_{i=1}^r\sigma_i$. The matrix trace norm is
alternatively known as the Schatten $\ell_1$-norm, Ky Fan norm, and
nuclear norm \cite{FAZEL}. Such problem has been received much
attention because it is closely related to the affine rank
minimization problem, which has appeared in  many control
applications including controller design, realization theory and
model reduction.

As it has been done in the previous sections, we can readily
reformulate (\ref{direcmodel}) as the following quadratic model to
determine the search direction,
\begin{equation}\label{dkmatrix}
\min_{D\in\mathbb{R}^{m\times n}}
\frac{1}{2}\Big\|X_k+hD-\big(X_k-\frac{h}{\lambda_k}\nabla
f(X_k)\big)\Big\|_2^2+\frac{\mu h}{\lambda_k}\|X_k+hD\|_*.
\end{equation}
To get the exact solution of (\ref{dkmatrix}), we now consider the
singular value decomposition (SVD) of a matrix
$Y\in\mathbb{R}^{m\times n}$ with rank $r$,
$$
Y=U\Sigma V^\top,\quad\Sigma=\text{diag}(\{\sigma_i\}_{1\leq i\leq
r}),
$$
where $U$ and $V$ are $m\times r$ and $r\times n$ matrices
respectively with orthonormal columns, and the singular value
$\sigma_i$ is positive. For each $\tau>0$, we let
$$
\mathcal{D}_{\tau}(Y)=U\mathcal{D}_{\tau}(\Sigma)V^\top,\quad
\mathcal{D}_{\tau}(\Sigma)=\text{diag}([\sigma_i-\tau]_+),
$$
where $[\cdot]_+=\max\{0,\cdot\}$. It is shown that
$\mathcal{D}_{\tau}(Y)$ obeys the following nuclear norm
minimization  problem \cite{JFCAI}, i.e.,
\begin{equation}\label{nclear}
\mathcal{D}_{\tau}(Y)=\text{arg}\min_X
\tau\|X\|_*+\frac12\|X-Y\|_F^2.
\end{equation}
Comparing (\ref{dkmatrix}) to (\ref{nclear}), we deduce that
$$
X_k+hD_k = U\mathcal{D}_{\mu h/\lambda_k}(\Sigma)V^\top\quad
\text{and}\quad \mathcal{D}_{\mu
h/\lambda_k}(\Sigma)=\text{diag}\big([\sigma_i-\frac{\mu
h}{\lambda_k}]_+\big),
$$
or, equivalently,
$$
D_k = -\frac{1}{h}\Big[X_k-U\mathcal{D}_{\mu
h/\lambda_k}(\Sigma)V^\top\Big].
$$
Subsequently, it is easily to derive the nonmonotone Barzilai and
Borwein gradient algorithmic framework for solving $\ell_2$-norm and
matrix trace norm regularization problems.

\setcounter{equation}{0}
\section{Numerical experiments}\label{expnoncov}
In this section, we present numerical results to illustrate the
feasibility and efficiency of NBBL1. We partition our experiments
into three classes based on different types of $f$. In the first
class, we perform our algorithm to solve $\ell_1$-regularized
nonconvex problem. In the second class, we test our algorithm to
solve $\ell_1$-regularized least squares which mainly appear in
compressive sensing. In the third class, we compare some
state-of-the-art algorithms in compressive sensing to show the
efficiency of our algorithm. All experiments are performed under
Windows XP and Matlab 7.8 (2009a) running on a Lenovo laptop with an
Intel Atom CPU at 1.6 GHz and 1 GB of memory.

\subsection{Test on $\ell_1$-regularized nonconvex problem}

Our first test is performed on a set of the nonconvex unconstrained
problems from the CUTEr \cite{CUTE} library. The second-order
derivatives of all the selected problems are available. Since we are
interested in large problems, we only consider the problems with
size at least $100$. For these problems, we use the dimensions that
is admissible of the ``double large" installation of CUTEr. The
algorithm stops if the norm of the search direction is small enough;
that is,
\begin{equation}\label{stop}
 \|d_k\|_2\leq tol_1.
\end{equation}
The iterative process is also stopped if the number of iterations
exceeds $10000$ without achieving convergence.

In this experiment, we take $tol_1=10^{-8}$, $h=1$,
$\lambda_{(\min)}=10^{-20}$, $\lambda_{(\max)}=10^{20}$. In the line
search, we choose $\tilde{\alpha}_0=1$, $\rho = 0.35$,
$\delta=10^{-4}$ and $\tilde{m}=5$. We test NBBL1 with different
parameter values $\mu=\{0, 1/4, 1/2, 2\}$. The numerical results are
presented in Table \ref{results}, which contains the name of the
problem (Problem), the dimensions of the problem (Dim), the number
of iterations (Iter), the number of function evaluations (Nf), the
CPU time required in seconds (Time), the final objective function
values (Fun), the norm of the final gradient of $f$ (Normg), and the
norm of final direction (Normd).

\begin{table}[ht]
\centering \scriptsize{\caption{Test result for NBBL1}
\begin{tabular}{l|ll||rrrrrr}
\hline
Problem    & Dim  &   $\mu$  &  Iter &  Nf &    Time&         Fun&    Normg&  Normd\\
\hline\hline
  VARDIM&     1000&      0.0&         49&       94&     0.48&    3.2506e-26&    3.6059e-13&    2.5893e-09  \\
FLETCHER&      100&      0.0&       1217&     1983&     3.75&    3.0113e-10&    2.2576e-05&    9.9505e-09  \\
  COSINE&    10000&      0.0&         51&      350&    23.41&   -9.9990e+03&    2.5387e-03&    4.4188e-09  \\
 SINQUAD&     1000&      0.0&        180&      908&    10.22&    6.4479e-05&    4.9743e-05&    6.8482e-09  \\
 GENROSE&      200&      0.0&        323&      646&     0.80&    1.0000e+00&    1.3870e-05&    9.9488e-09  \\
   WOODS&     1000&      0.0&        322&      579&     3.00&    9.9104e-13&    7.2693e-06&    7.5155e-09  \\
NONCVXU2&      200&      0.0&       4987&     8476&    21.17&    4.6373e+02&    2.0726e-07&    7.0300e-09  \\
BROYDN7D&      500&      0.0&       1305&     2402&    10.38&    3.8234e+00&    9.3966e-07&    9.2037e-09  \\
 CHAIWOO&     1000&      0.0&        757&     1335&     9.80&    1.0000e+00&    8.0006e-06&    4.4747e-09  \\
\hline\hline
  VARDIM&     1000&      0.25&         49&       94&     0.63&    2.5000e+02&    6.8487e+00&    6.4503e-09  \\
FLETCHER&      100&      0.25&       5042&     8657&    15.83&    2.4497e+01&    2.5000e+00&    9.7495e-09  \\
  COSINE&    10000&      0.25&         47&      108&    20.33&   -1.6829e+03&    2.4999e+01&    4.8382e-09  \\
 SINQUAD&     1000&      0.25&         46&       92&     1.95&    2.8084e-01&    3.5357e-01&    2.2567e-13  \\
 GENROSE&      200&      0.25&          9&       57&     0.06&    1.9846e+02&    3.5267e+00&    3.7742e-09  \\
   WOODS&     1000&      0.25&        645&     1527&     6.69&    2.4911e+02&    7.9057e+00&    7.0300e-09  \\
NONCVXU2&      200&      0.25&        998&     1957&     4.78&    5.6230e+02&    3.3990e+00&    8.2357e-09  \\
BROYDN7D&      500&      0.25&       1314&     2366&    10.95&    8.9609e+01&    5.5902e+00&    8.1753e-09  \\
 CHAIWOO&     1000&      0.25&        435&      746&     6.11&    2.5055e+02&    7.9057e+00&    6.6783e-09  \\
\hline\hline
  VARDIM&     1000&      0.5&        448&     1540&     4.80&    4.8920e+02&    1.4141e+01&    6.6304e-09  \\
FLETCHER&      100&      0.5&       2654&     4513&     8.00&    4.8953e+01&    5.0000e+00&    8.4947e-09  \\
  COSINE&    10000&      0.5&         21&       66&     8.45&    1.0042e+02&    4.9995e+01&    9.2370e-09  \\
 SINQUAD&     1000&      0.5&         36&       67&     1.16&    4.2508e-01&    7.0724e-01&    7.0416e-09  \\
 GENROSE&      200&      0.5&          9&       60&     0.08&    1.9887e+02&    7.0534e+00&    5.5793e-09  \\
   WOODS&     1000&      0.5&        643&     1321&     6.03&    4.9644e+02&    1.5811e+01&    2.9571e-09  \\
NONCVXU2&      200&      0.5&        595&      906&     2.59&    5.7366e+02&    6.7493e+00&    1.5667e-09  \\
BROYDN7D&      500&      0.5&       1020&     2096&     8.30&    1.7207e+02&    1.1180e+01&    8.8339e-09  \\
 CHAIWOO&     1000&      0.5&       1265&     2109&    17.00&    5.0253e+02&    1.5811e+01&    5.3247e-09  \\
\hline\hline
  VARDIM&     1000&      2.0&       1506&     4816&    14.81&    1.7511e+03&    6.2435e+01&    2.4783e-09  \\
FLETCHER&      100&      2.0&        996&     1721&     3.13&    2.4344e+02&    2.0000e+01&    2.8322e-09  \\
  COSINE&    10000&      2.0&          2&        3&     1.00&    9.9990e+03&    0.0000e+00&    0.0000e+00  \\
 SINQUAD&     1000&      2.0&         67&      110&     2.30&    8.6555e-01&    2.8291e+00&    5.0413e-11  \\
 GENROSE&      200&      2.0&          2&        3&     0.03&    2.0000e+02&    2.8213e+01&    0.0000e+00  \\
   WOODS&     1000&      2.0&        102&      283&     1.05&    3.3572e+03&    6.3246e+01&    3.8964e-09  \\
NONCVXU2&      200&      2.0&        251&      825&     1.50&    7.3779e+02&    1.7429e+01&    9.7233e-09  \\
BROYDN7D&      500&      2.0&        152&      455&     1.09&    5.0216e+02&    6.7458e+00&    5.0308e-09  \\
 CHAIWOO&     1000&      2.0&        927&     1634&    13.45&    1.9739e+03&    6.3246e+01&    9.5918e-09  \\
\hline
\end{tabular}\label{results}
}
\end{table}

From Table \ref{results}, we see that NBBL1 works successfully for
all the test problems in each case. Particularly, NBBL1 always
produces great accuracy solutions within little consuming time. The
proposed algorithm requires large number of iterations for some
special problems, such as problems {\tt FLETCHER}, {\tt NONCVXU2},
{\tt BROYDN7D} with parameter $\mu=0$,  problems {\tt FLETCHER} and
{\tt BROYDN7D} with $\mu=0.25$, problem {\tt FLETCHER} with
$\mu=0.5$, and {\tt VARDIM} with $\mu=2$. However, if lower
precision is permitted, the number can be decreased dramatically.
The first part of Table \ref{results} presents the numerical results
of NBBL1 for solving a smooth nonconvex minimization problem without
any regularization. From the last second column in this part, we
observe that the norm of the final gradient is sufficiently small.
The important observation verifies that the proposed algorithm is
very efficient to solve unconstrained smooth minimization problems.
It is not a pleasant supervise, because our algorithm reduces to the
well-known nonmonotone Barzailai-Borwein gradient of Raydan
\cite{RAYDAN97} in this case.

\subsection{Test on $\ell_1$-regularized least square}

Let $\bar{x}$ be a sparse or a nearly sparse original signal, $A\in
\mathbb{R}^{m\times n}$ ($m\ll n$) be a linear operator,
$\omega\in\mathbb{R}^m$ be a zero-mean Gaussian white noise, and
$b\in\mathbb{R}^m$ be an observation which satisfies the
relationship
$$
b=A\bar{x}+\omega.
$$
Recent compressive sensing results show that, under some technical
conditions, the desirable signal can be reconstructed almost exactly
by solving the $\ell_1$-regularized least square (\ref{onenorm}). In
this subsection, we perform two classes of numerical experiments for
solving (\ref{onenorm}) by using the Gaussian matrices as the
encoder. In the first class, we show that our algorithm performs
well to decode a sparse signal,  while in the second class we do a
series of experiments with different $h$ to choose the best one. We
measure the quality of restoration $x^*$ by means of the relative
error to the original signal $\bar{x}$; that is
\begin{equation}\label{eq:relerr}
\text{RelErr} = \frac{\|x^* - \bar{x}\|_2}{\|\bar{x}\|_2}.
\end{equation}

In the first test, we use a random matrix $A$ with independent
identically distributions Gaussian entries. The $\omega$ is the
additive Gaussian noise of zero mean and standard deviation
$\sigma$. Due to the storage limitations of PC, we test a small size
signal with $n=2^{11}$, $m=2^{9}$. The original contains randomly
$p=2^6$ non-zero elements. Besides, we also choose the noise level
$\sigma=10^{-3}$. The proposed algorithm starts at a zero point and
terminates when the relative change of two successive points are
sufficient small, i.e.,
\begin{equation}\label{stop2}
\frac{\|x_k-x_{k-1}\|_2}{\|x_{k-1}\|_2}<tol_2.
\end{equation}
In this experiment, we take $tol=10^{-4}$, $h=10^{-2}$,
$\lambda_{(\min)}=10^{-30}$, $\lambda_{(\max)}=10^{30}$. In the line
search, we choose $\tilde{\alpha}_0=10^{-2}$, $\rho = 0.35$,
$\delta=10^{-4}$ and $\tilde{m}=5$. The original signal, the limited
measurement, and the reconstructed signal are given in Figure
\ref{fig1}.

\begin{figure}[htbp]
\vspace{-0cm}\centering
\includegraphics[scale=0.38]{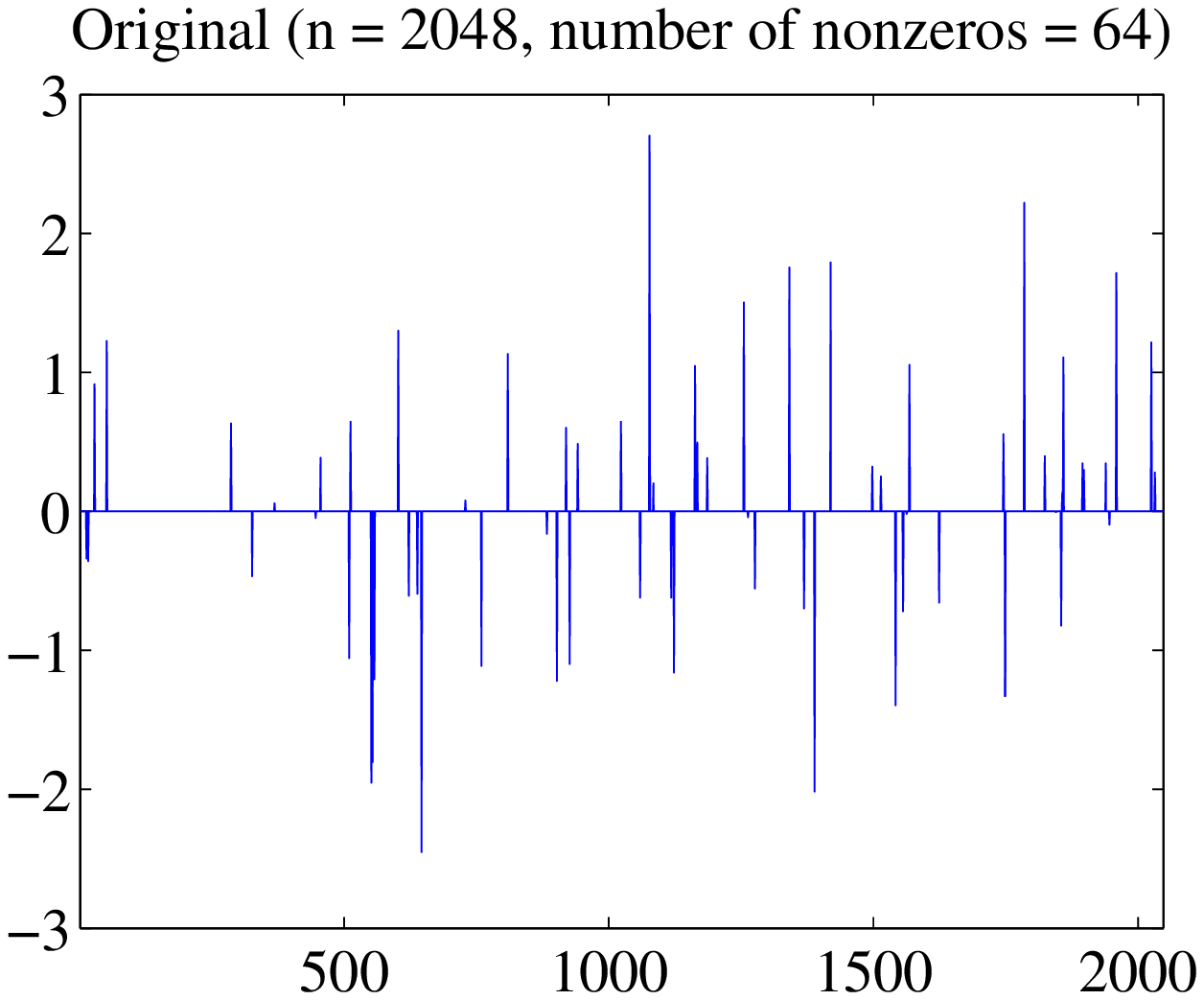}\hspace{-.5cm}
\includegraphics[scale=0.38]{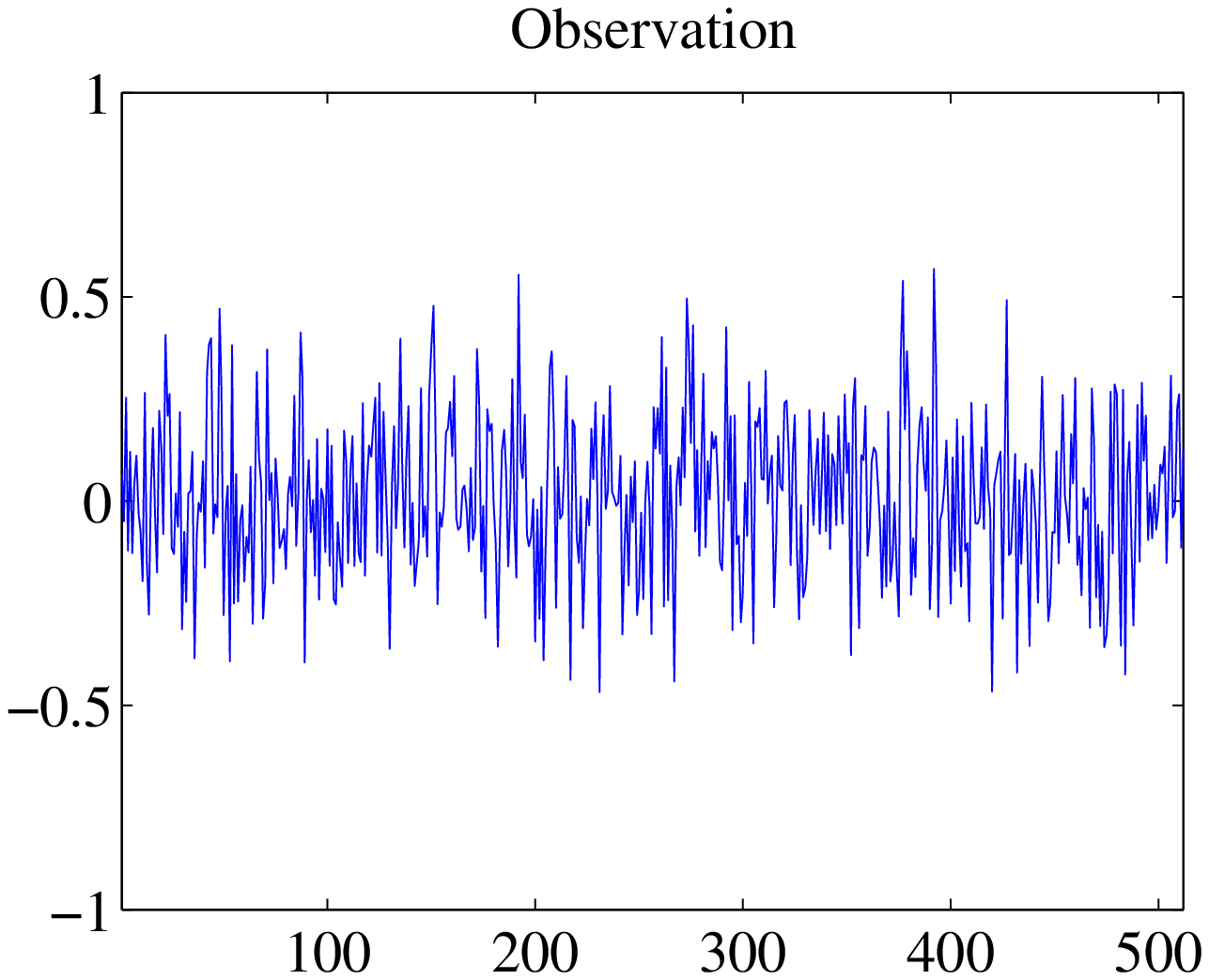}\hspace{-.5cm}
\includegraphics[scale=0.38]{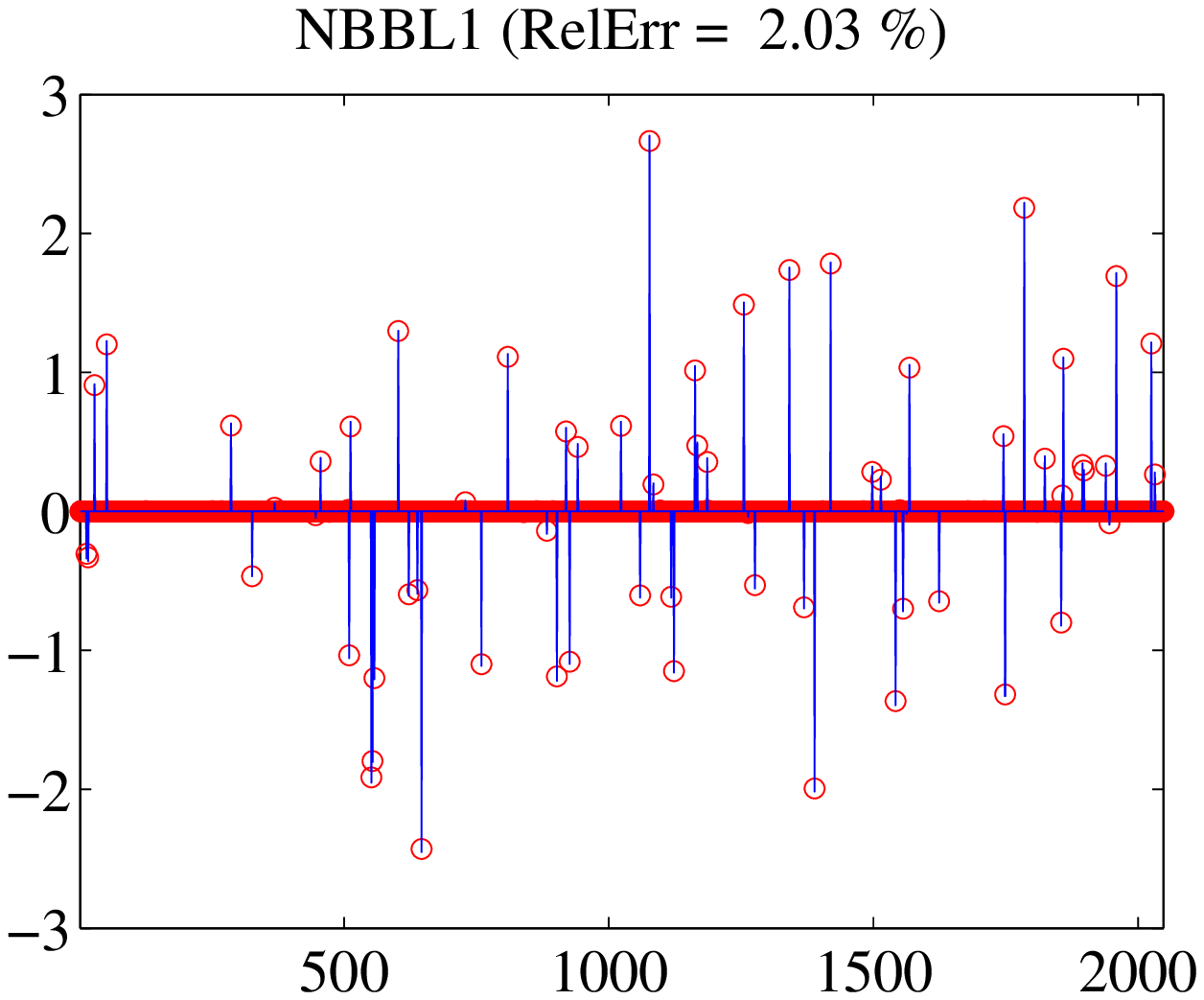}
\vspace{-0.5cm} \caption{{\footnotesize Left: original signal with
length $4096$ and $64$ positive non-zero elements; Middle: the noisy
measurement with length $512$; Right: recovered signal by NBBL1 (red
circle) versus original signal (blue peaks).}}\label{fig1}
\end{figure}

Comparing the left plot to the right one in Figure \ref{fig1}, we
clearly see that the original sparse signal is restored almost
exactly. We see that all the blue peaks are circled by the red
circles, which illustrates that the original signal has been found
almost exactly. All together, this simple experiment shows that our
algorithm performs quite well, and provides an efficient approach to
recover large sparse non-negative signal.

We have clearly known that the last term in the approximate
quadratic model (\ref{direcmodel}) is equivalent to $\|x_k+d\|_1$
exactly when $h=1$. Next, we provide evidence to show that other
values can be potentially and dramatically better than $h=1$. We
conduct a series of experiments and compare the performance at each
case. In our experiments, we set all the parameters values as the
previous test except for $n=2^{10}$. We present, in Figure
\ref{figh}, the impact of the parameter $h$ values on the total
number of iterations, the computing time, and the quality of the
recovered signal. In each plot, the level axis denotes the values of
$h$ from $0.01$ to $1$ in a log scale.

\begin{figure}[htbp]
\vspace{-0cm}\centering
\includegraphics[scale=0.38]{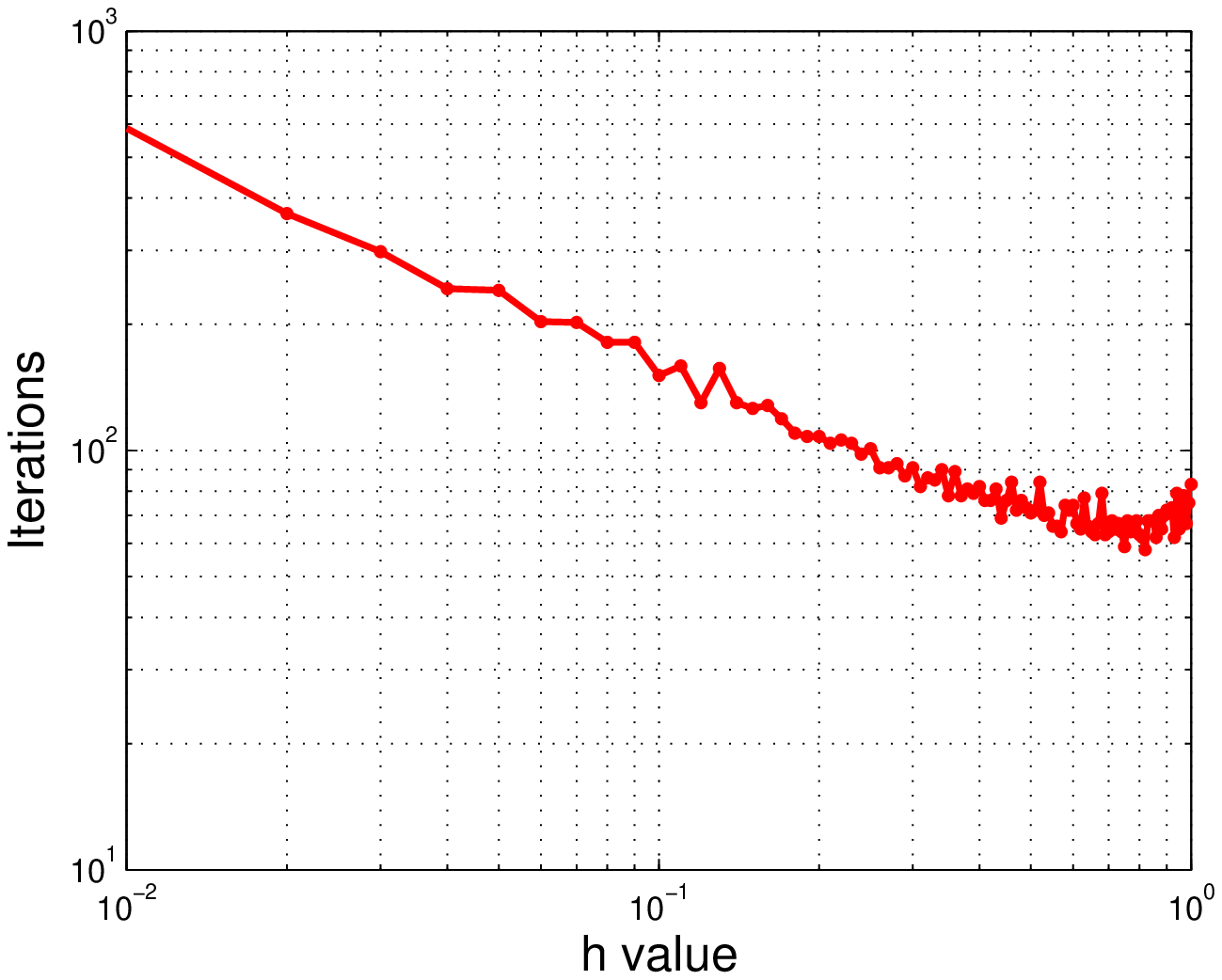}\hspace{-.5cm}
\includegraphics[scale=0.38]{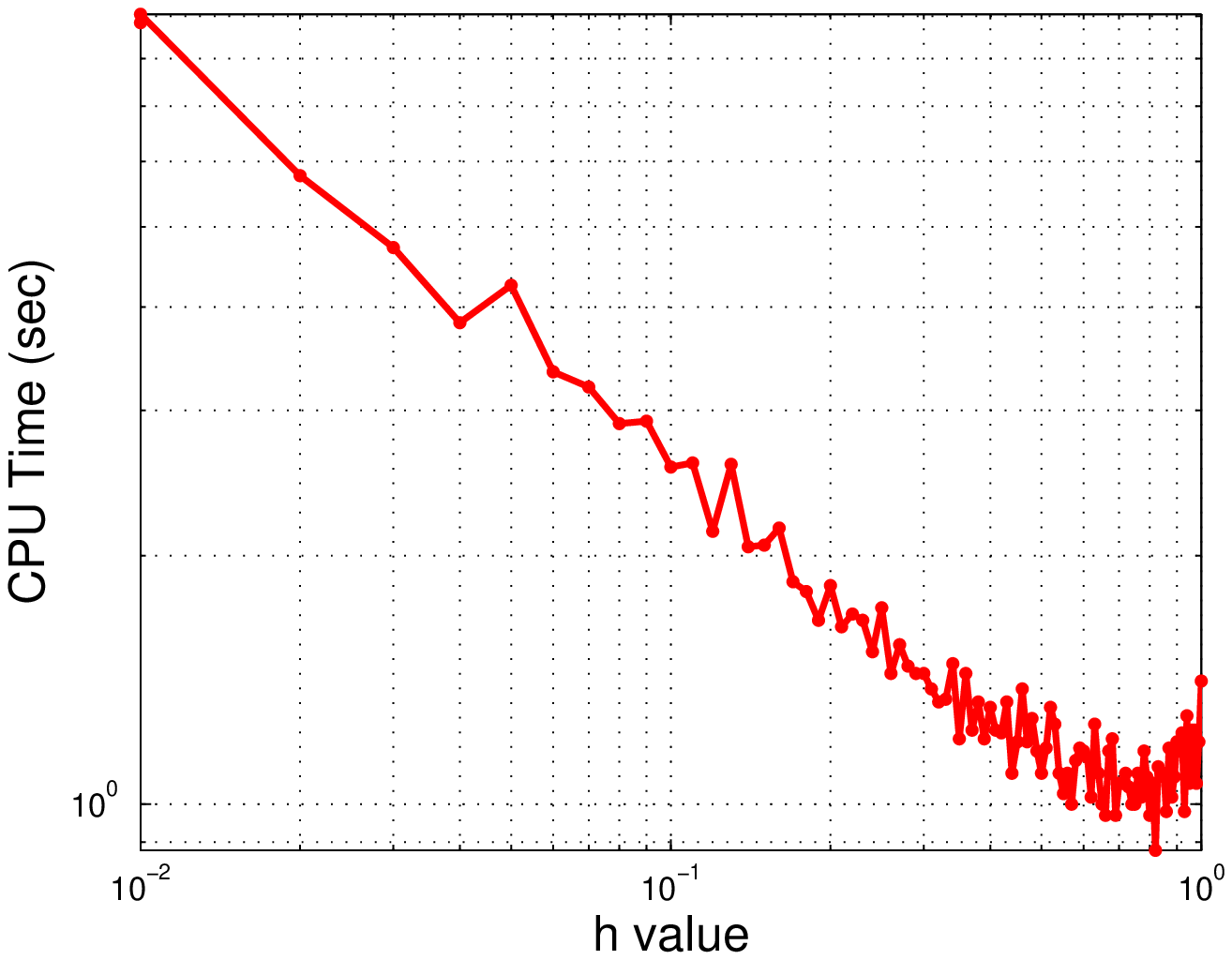}\hspace{-.5cm}
\includegraphics[scale=0.38]{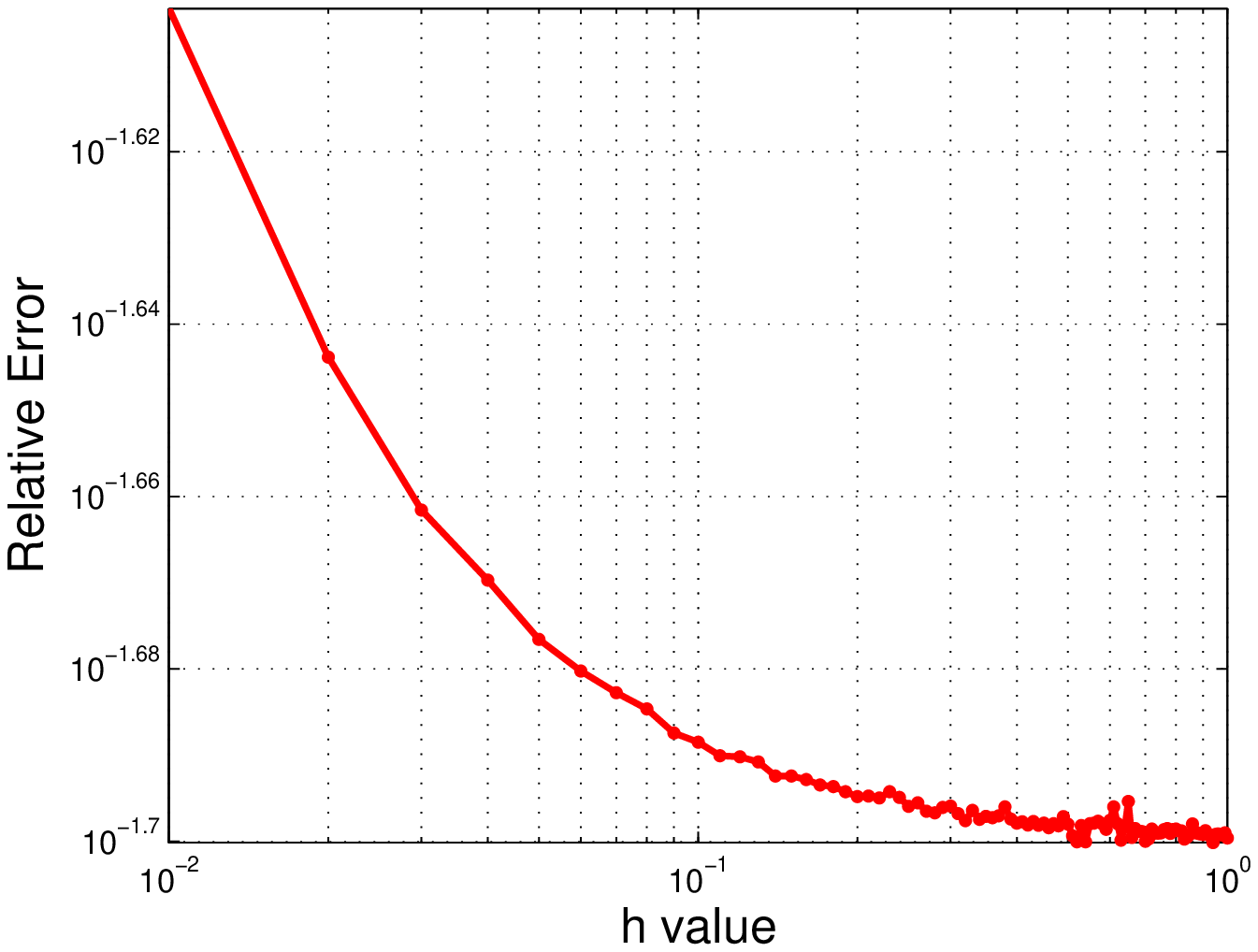}
\vspace{-0.5cm} \caption{{\footnotesize Performance of NBBL1: number
of iterations (left), computing time (middle) and final relative
error (right). In each plot, the horizontal axis represents the
value of $h$ in log scale.}}\label{figh}
\end{figure}

In Figure \ref{figh}, the number of iterations, the computing time
and the quality of restorations are greatly influenced by the $h$
values. Generally, as $h$ increases, NBBL1 always has good
performance. The right plot clearly demonstrates that the relative
error decreases dramatically at the very beginning and then becomes
slightly after $0.2$. However, the quality of restoration can not be
improved any more after $0.7$. On the other hand, the left and the
middle plots show that the number of iterations and the computing
time slightly increase after $h=0.8$. Taking three plots together,
these plots verify that the performance of NBBL1 is sensitive to the
$h$ values, and the value $h\in[0.7,0.8]$ may be the better choice.

\subsection{Comparisons with NESTA-Ct, GPSR-BB, CGD, TwIST and FPC-BB}
The third class of the experiment is to test against several
state-of-the-art algorithms which are specifically designed in
recent years to solve $\ell_1$-regularized problems in compressive
sensing or linear inverse problems. It is difficult to compare each
algorithm in a very fair way, because each algorithm is compiled
with different parameter settings, such as the termination
criterions, the staring points, or the continuation techniques.
Hence, as usual, in our performance comparisons, we run each code
from the same initial point, use all the default parameter values,
and only observe the convergence behavior of each algorithm to
attain a similar accuracy solution.

NESTA\footnote{Available at \url{http://www.acm.caltech.edu/~nesta}}
uses Nesterov's smoothing technique \cite{smooth} and gradient
method \cite{nester_gradient} to solve basis pursuit denoising
problem. The current version is capable of solving $\ell_1$-norm
regularization problems with different types including
(\ref{onenorm}). In this experiment, we test NESTA with continuation
(named NESTA-Ct) for comparison, where this algorithm solves a
sequence of problems (\ref{onenorm}) by using a decreasing sequence
of values of $\mu$. Additionally, NESTA-Ct uses the intermediated
solution as a warm start for the next problem. In running NESTA, all
the parameters are taken as default except {\tt TolVar} is set to be
$1.e-5$ to obtain similar quality solutions with others.

GPSR-BB\footnote{Available at \url{http://www.lx.it.pt/~mtf/GPSR}}
(Gradient Projections for Sparse Reconstruction) \cite{GPSR}
reformulates the original problem (\ref{onenorm}) as a
box-constrained quadric programming problem (\ref{subbound}) by
splitting $x=u-v$. Figueiredo, Nowak and Wright use a gradient
projection method with Barziali-Borwein steplength \cite{BB} for its
solution. Moreover, the nonmonotone line search \cite{GLL} is also
used to improve its performance. For the comparison with GPSR-BB, we
use its continuation variant and set all parameters as default.

The well-known CGD\footnote{Available at
\url{http://www.math.nus.edu.sg/~matys/}} uses gradient algorithm to
solve (\ref{onesub}) in order to obtain the search direction
$d_k^i=ze^i$ in $i\in\mathcal{J}$, where  $\mathcal{J}$ is a
nonempty subset of $\{1,...,n\}$, and choose the index subset
$\mathcal{J}$ a Gauss-southwell rule. The iterative process
$x_{k+1}=x_k+\alpha_kd_k$ continues until some termination critera
are met, where $d_k^i=0$ with $i\notin\mathcal{J}$ and the stepsize
$\alpha_k$ by using a Armijo rule. In running CGD, we use the code
{\tt CGD} in its Matlab package, and set all the parameter as
default except for {\tt init=2} to start the iterative process at
$x_0=A^\top b$.

TwIST\footnote{Available at:
\url{http://www.lx.it.pt/~bioucas/TwIST/TwIST.htm}} is a two-step
IST algorithm for solving a class of linear inverse problems.
Specifically, TwIST is designed to solve
\begin{equation}\label{lip}
\min_u  \mathcal{J}(u)+\frac{\mu}{2}\|Au-f\|_2^2,
\end{equation}
where $A$ is a linear operator, and $\mathcal{J}(\cdot)$ is a
general regularizer, which can be either the $\ell_1$-norm or the
TV. The iteration framework of TwIST is
$$
u_{k+1} = (1-\alpha)u_{k-1}+(\alpha-\delta)u_k+\delta\Phi_\mu
(\xi_k),
$$
where $\alpha,\delta>0$ are parameters, $\xi_k=u_k+A^\top(f-Au_k)$
and
\begin{equation}\label{denois}
\Phi_\mu(\xi_k)=\text{arg}\min_u
\mathcal{J}(u)+\frac{\mu}{2}\|u-\xi_k\|_2^2.
\end{equation}
We use the default parameters in TwIST and terminate the iteration
process when the relative variation of function value falls below
$10^{-4}$.

FPC\footnote{Available at
\url{http://www.caam.rice.edu/~optimization/L1/fpc}} is the
fixed-point continuation algorithm to solve the general
$\ell_1$-regularized minimization problem (\ref{probtype}), where
$f$ is a continuous differentiable convex function. At current $x_k$
and any scalar $\tau>0$, the next iteration is produced by the
so-called fixed point iteration
$$
x_{k+1}=\text{sgn}(x_k-\tau \nabla f(x_k))\max\big\{|x_k-\tau \nabla
f(x_k)|-\mu\tau,0\big\},
$$
where ``sgn" is a componentwise sign function. In order to obtain a
good practical performance, a continuation approach is also
augmented in FPC. Moreover, the FPC is further modified by using
Barzilai-Borwein stepsize (code FPC-BB in Matlab package FPC\_v2).
The continuation and Barzilai-Borwein  stepsize techniques make
FPC-BB faster than FPC. In running of FPC-BB, we use all the default
parameter values except we set {\tt xtol = 1e-5} to stop the
algorithm when the relative change between successive points is
below {\tt xtol}.

In this test, $A$ is a partial discrete cosine coefficients matrix
(DCT), whose $m$ rows are chosen randomly from the $n\times n$ DCT
matrix. Such encoding matrix $A$ does not require storage and
enables fast matrix-vector multiplications involving $A$ and
$A^\top$. Therefore, it is able to be used to test much larger size
problems than using Gaussian matrices.  In NBBL1, we take
$tol_2=10^{-4}$, $h=0.8$, $\lambda_{(\min)}=10^{-30}$,
$\lambda_{(\max)}=10^{30}$. In the line search, we choose
$\tilde{\alpha}_0=0.8$, $\rho = 0.35$, $\delta=10^{-5}$ and
$\tilde{m}=5$. In this comparison, we let  $n= 2^{12}$, $m = {\tt
floor}(n/4)$. The original signal $\bar{x}$ contains $p={\tt
floor}(m/6)$ number of nonzero components, where {\tt floor} is a
Matlab command used to round an element to the nearest integers
towards minus infinity. Moreover, the observation $b$ is
contaminated by Gaussian noise with level $\sigma=1e-3$. The goal is
to use each algorithm to reconstruct $\bar{x}$ from the observation
$b$ by solving (\ref{onenorm}) with $\mu=2^{-8}$. All the tested
algorithms start at $x_0=A^\top b$ and terminate with different
stopping criterions to produce similar quality resolutions. To
specifically illustrate the performance of each algorithm, we draw
four figures to show their convergence behavior from the point of
objective function values and relative error as the iteration
numbers and computing time increase, which given in Figure
\ref{figcom}.

\begin{figure}[ht]
\vspace{-0cm}\centering
\includegraphics[scale=0.5]{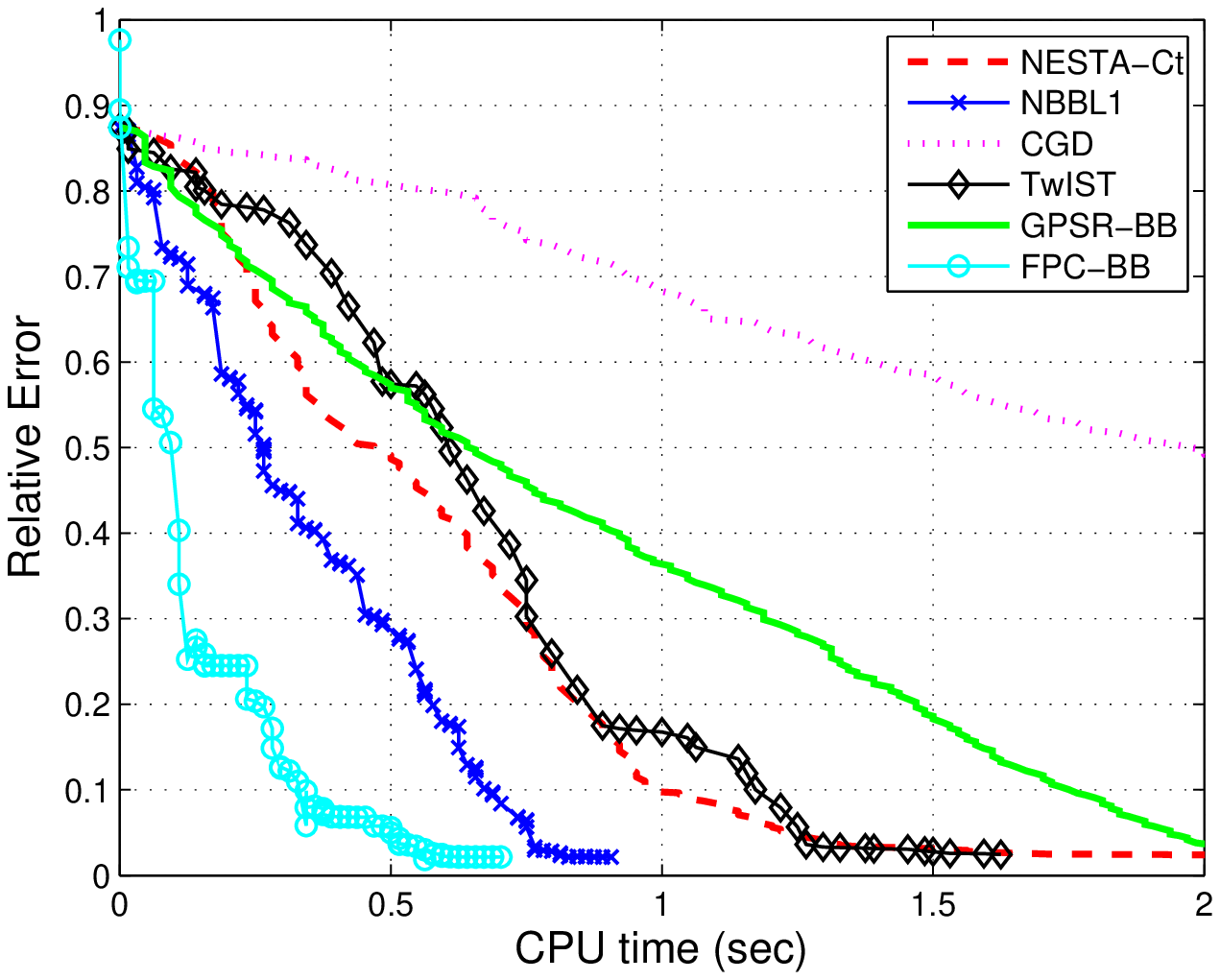}\hspace{-.5cm}
\includegraphics[scale=0.5]{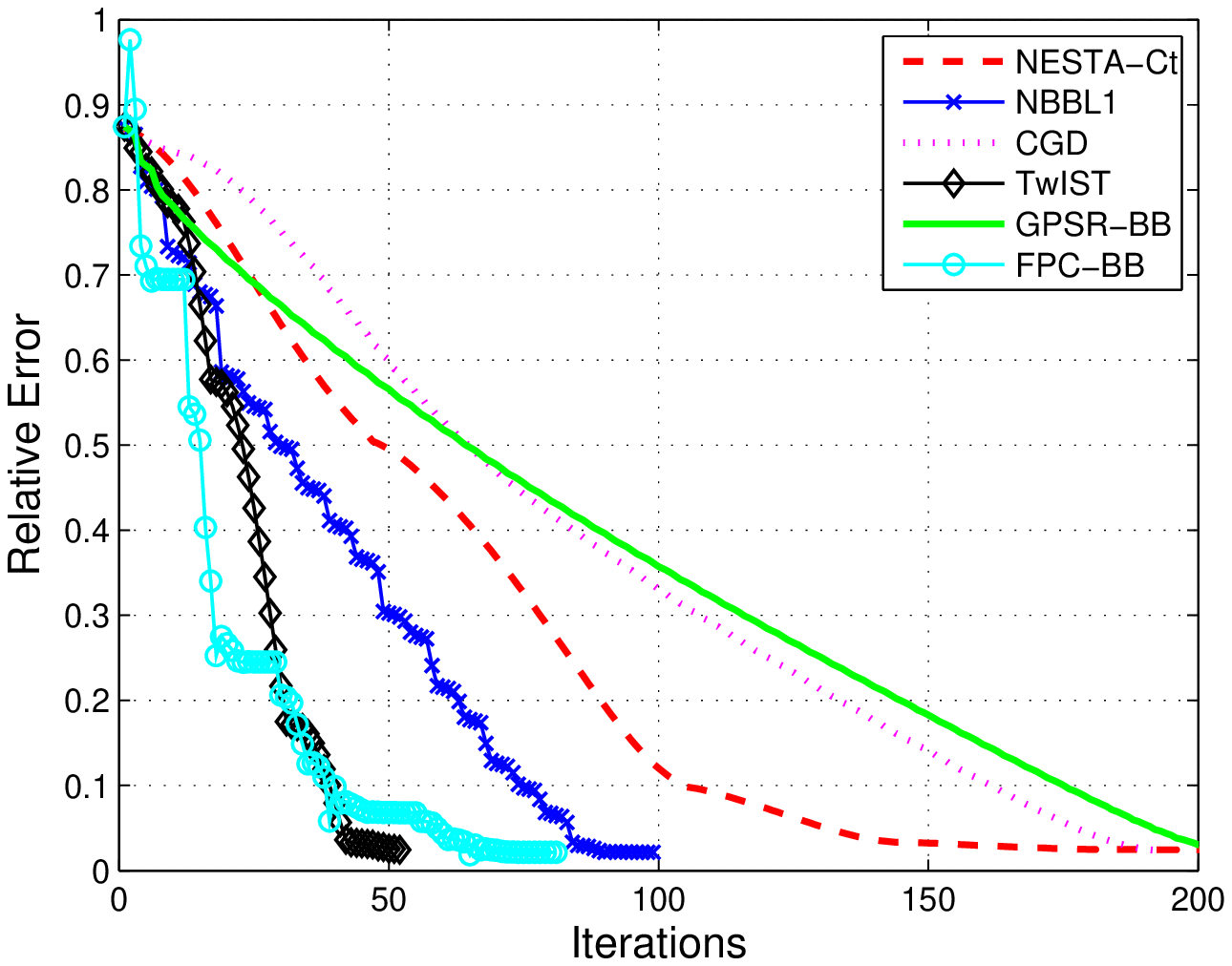}\\
\includegraphics[scale=0.5]{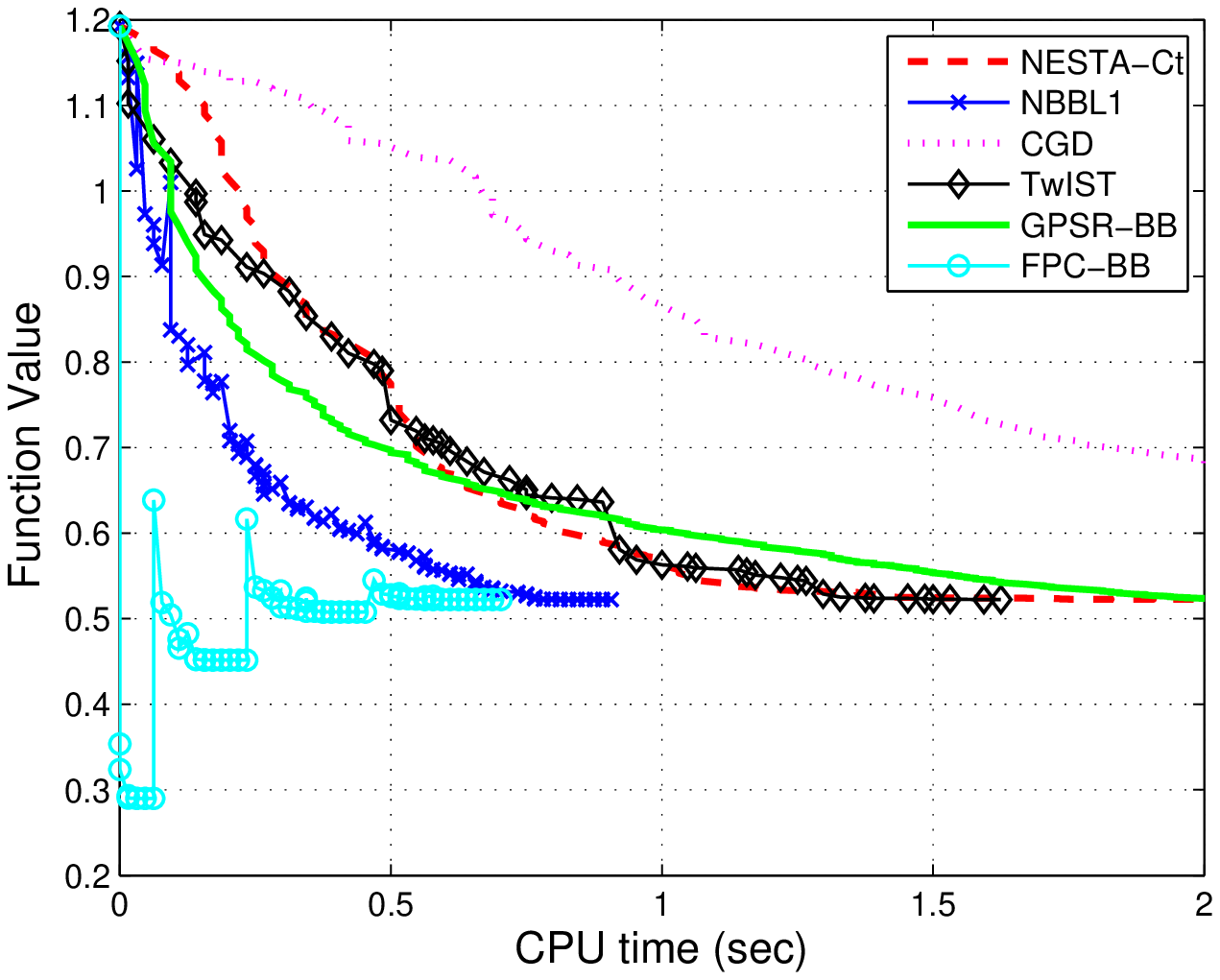}\hspace{-.5cm}
\includegraphics[scale=0.5]{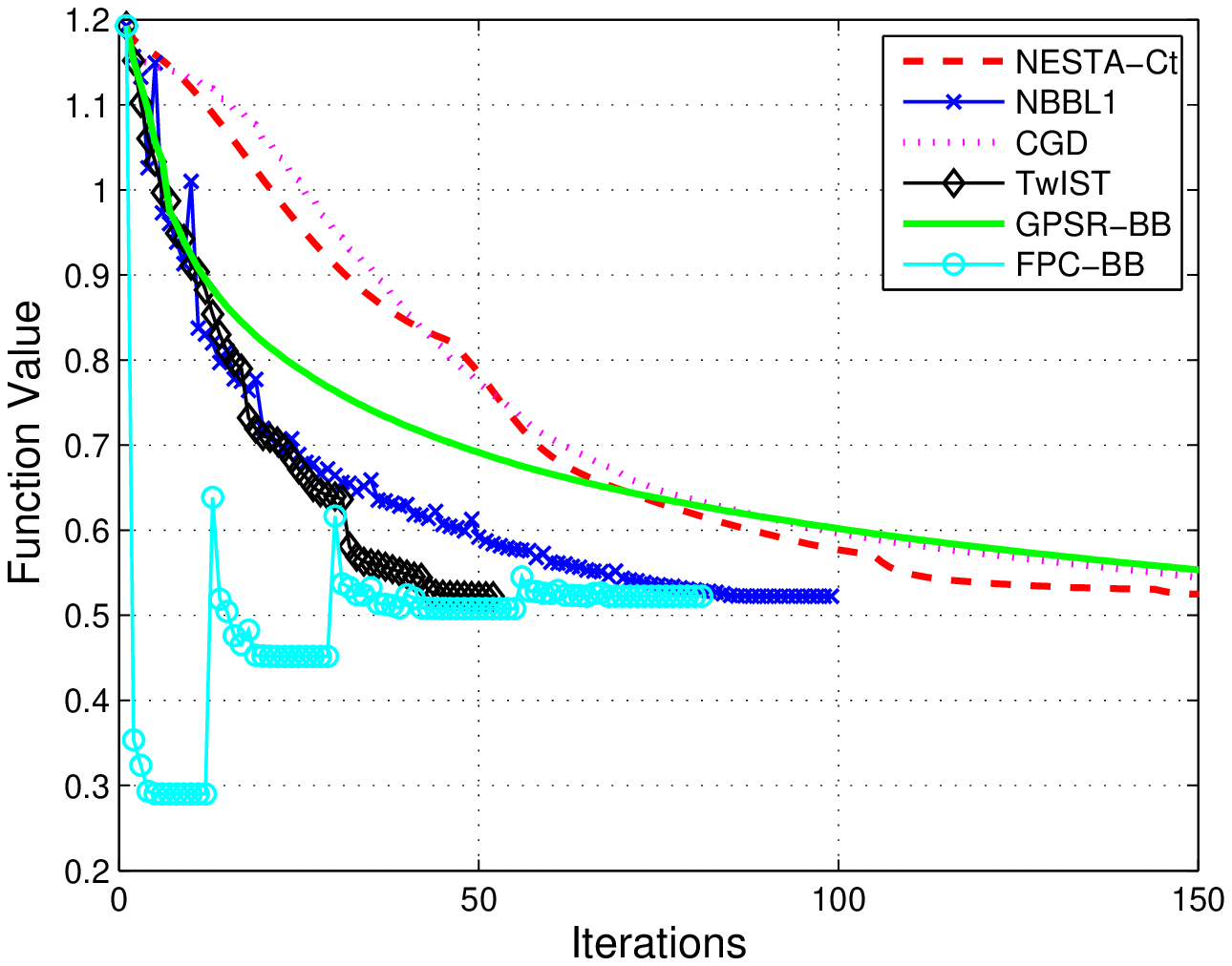}
\vspace{-0.5cm} \caption{{\footnotesize Comparison result of NBBL1,
NESTA-Ct, CGD, TwIST, GPSR-BB, and FPC-BB. The x-axes represent the
CPU time in seconds (left column) and the number of iterations
(right column). The y-axes represent the relative error (top row)
and the function values (bottom row). }}\label{figcom}
\end{figure}

From the top plots in Figure \ref{figcom},  NBBL1 usually decreases
relative errors faster than NESTA-Ct, CGD and GPSR-BB throughout the
entire iteration process, and meanwhile requires less number of
iterations. The top right plot shows that TwIST needs less steps
than NBBL1 to obtain similar level of relative error. However, TwIST
is much slower because it has to solve a de-noising subproblem
(\ref{denois}) at each iteration. Unfortunately, NBBL1 needs further
improvement to challenge the well-known code FPC-BB. We now turn our
attention to observe the function values behavior of each algorithm.
Similarly, NBBL1 is superior to NEST-Ct, CGD, GPSR-BB and TwIST from
the computing time points of view. FPC-BB reaches the lowest
function values at the very beginning, and then starts to increase
it to meet nearly equal final values at the end. In this test, CGD
appears to be much slower than the others, because it is  sensitive
to the choice of starting points. If CGD starts at $x_0=0$ with all
the other settings unchanged, its performance should be
significantly improved \cite{adm}. Taking everything together, from
the limited numerical experiment, we conclude that NBBL1 provides an
efficient approach for solving $\ell_1$-regularized nonsmooth
problem and is competitive with or performs better than NESTA-Ct,
GPSR-BB, CGD, TwIST and FPC-BB.

\setcounter{equation}{0}
\section{Conclusions}\label{concludsec}
In this paper, we proposed, analyzed, and tested a new practical
algorithm to solve the separable nonsmooth minimization problem
consisting of a $\ell_1$-norm regularized term and a continuously
differentiable term. The type of the problem mainly appears in
signal/image processing, compressive sensing, machine learning, and
linear inverse problems. However, the problem is challenging due to
the non-smoothness of the regularization term. Our approach
minimizes an approximal local quadratic model to determine a search
direction at each iteration. The search direction reduces to the
classic Barzilai-Borwein gradient method in the case of $\mu=0$. We
show that the objective function is descent along this direction
providing that the initial stepsize is less than $h$. We also
establish the algorithm's global convergence theorem by
incorporating a nonmonotone line search technique and assuming that
$f$ is bounded below. Extensive experimental results show that the
proposed algorithm is an effective toll to solve
$\ell_1$-regularized nonconvex problems from CUTEr library.
Moreover, we also run our algorithm to recover a large sparse signal
from its noisy measurement, and numerical comparisons illustrate
that our algorithm outperforms or is competitive with several
state-of-the-art solvers which specifically designed to solve
$\ell_1$-regularized compressive sensing problems.

Unlike all the existing algorithms in this literature, our approach
uses an linear model to approximate $\|x_k+d\|_1$ for computing the
search direction with a small scalar $h$; that is
$$
\|x_k+d\|_1\approx \|x_k\|_1 + \frac{\|x_k+hd\|_1-\|x_k\|_1}{h}.
$$
Although the equations may hold exactly in the case of $h=1$, a
series of numerical experiments show that $h\in[0.7,0.8]$ may
produce better performance with suitable experiment settings. This
approach is distinctive and novel; therefore, it is one of the
important contributions of this paper. As we all know, the
nonmonotone Barzilai-Borwein gradient algorithm of Raydan
\cite{RAYDAN97} is very effective for smooth unconstrained
minimization, and its remarkable effectiveness in signal
reconstruction problems involving $\ell_1$-regularized problems has
not been clearly explored. Hence, our approach can be considered as
a modification or extension, to  broaden the university of
\cite{RAYDAN97}. Moreover, the numerical experiments illustrate that
our approach performs comparable to or even better than several
state-of-the-art algorithms. Surely, this is the numerical
contribution of our paper. Although the proposed algorithm needs
further improvement to challenge the well-known code FPC-BB, the
enhancement of it to deal with non-convex problems is noticeable.
Our algorithm is readily to solve the $\ell_1$-regularized logistic
regression, the $\ell_2$-norm and matrix trace norm minimization
problems in machine learning. However, we do not test them in this
paper. This should be interesting for further investigations.


\end{document}